\newcommand{\AW}[1]{}
\declaretheorem[numberwithin=section]{theorem}
\declaretheorem[sibling=theorem]{lemma,proposition,corollary,definition,remark}
\newcommand{\N}{\mathbb{N}}
\newcommand{\Z}{\mathbb{Z}}
\newcommand{\R}{\mathbb{R}}
\newcommand{\Pow}{\mathcal{P}}
\newcommand{\Fix}{\ensuremath{\mathrm{Fix}}}
\newcommand{\Id}{\ensuremath{\mathrm{id}}}
\newcommand{\lfp}{\mu}
\newcommand{\gfp}{\nu}
\newcommand{\Prov}{\mathsf{Prov}}
\newcommand{\True}{\mathsf{True}}
\newcommand{\Trans}{\mathsf{Trans}}
\newcommand{\Risk}{\mathsf{Risk}}
\newcommand{\Gain}{\mathsf{Gain}}
\newcommand{\loss}{\mathcal{L}}
\newcommand{\eval}{\ensuremath{\mathrm{eval}}}
\newcommand{\C}{\mathcal{C}}
\newcommand{\G}{\mathcal{G}}
\newcommand{\Lang}{\mathcal{L}}
\newcommand{\Sem}{\llbracket,\cdot,\rrbracket}
\newcommand{\modelsK}{\Vdash}
\DeclareMathOperator{\diag}{diag}
\DeclareMathOperator{\Cl}{Cl}
\title{Fixed-Point Theorems and the Ethics of Radical Transparency: $\mathcal{A}$ Logic-First Treatment}
\author{Faruk Alpay\thanks{Lightcap, Department of Logic, \texttt{alpay@lightcap.ai}}%
  \and Hamdi Alakkad\thanks{Bahcesehir University, Department of Engineering, \texttt{hamdi.alakkad@bahcesehir.edu.tr}}}
\date{}
\begin{document}
{\renewcommand\thefootnote{\fnsymbol{footnote}}\maketitle\setcounter{footnote}{0}}

\begin{abstract}
We investigate how fixed‑point theorems and diagonalization phenomena impose principled limits on the ideal of "radical transparency."\AW{0.45} By casting \emph{transparency policies} as self‑referential disclosure operators on formal information lattices, we formalize ethical risks of unrestricted introspection.\AW{0.45} Key contributions include: (i) an impossibility theorem, via the diagonal lemma (Gödel 1931) and Tarski's undefinability theorem, showing that no sufficiently expressive system can maintain a total, sound transparency predicate for its own statements without paradox\cite{Tarski1955}.\AW{0.45} (ii) a Lawvere‑style fixed‑point construction demonstrating inevitable self‑reference in any cartesian closed model of disclosure\cite{Lawvere1969}.\AW{0.45} (iii) a Knaster–Tarski design theorem ensuring existence of extremal "safe" and "unsafe" transparency fixed points, and showing that the least fixed point minimizes a formal risk functional\cite{Tarski1955}.\AW{0.45} (iv) a construction of partial transparency via Kripke's least fixed point of truth\cite{Kripke1975} that avoids liar‑style collapse while meeting accountability constraints.\AW{0.45} (v) a provability‑logic analysis identifying hazards in self‑endorsing policies\cite{Lob1955}.\AW{0.45} (vi) a recursion‑theoretic exploitation theorem formalizing Goodhart's law under full disclosure\cite{Kleene1938}.\AW{0.45} (vii) a demonstration that paraconsistent or three‑valued logics can accommodate total transparency without triviality, at the cost of classical reasoning.\AW{0.45} (viii) a modal $\mu$‑calculus formulation of safety invariants preserved through iterative disclosure.\AW{0.45} Together, these results illustrate a mathematical limit on "too much transparency" and provide a calculus for balancing openness against strategic gaming and paradox.\AW{0.45} We conclude with an equilibrium analysis of disclosure and response, and a lattice‑theoretic KKT‑style optimality condition for transparency design\cite{Kakutani1941}.\AW{0.45}
\end{abstract}

\noindent \textbf{Keywords:} fixed-point, diagonalization, Lawvere, Knaster–Tarski, Kripke truth, Löb, $\mu$-calculus, recursion theorem, self-reference, transparency policy, gaming, partial truth, domain theory, category theory, ethics of information.

\section{Preliminaries and Notation}\label{sec:prelim}
We work in a setting that combines classical mathematical logic with order-theoretic and categorical structures. We briefly review notation and key results used throughout.

\subsection{Formal Languages and Arithmetization}
Let $\Lang$ be a formal language (e.g. of arithmetic or a suitable theory) rich enough to represent its own syntax. We assume a Gödel numbering $\ulcorner\cdot\urcorner: {\text{formulae}}\to \N$ encoding formulas as natural numbers . For a formula $\phi$, $\ulcorner \phi\urcorner$ denotes its code. We write $\Prov_T(x)$ for a fixed arithmetical provability predicate of a theory $T$ (capturing "$x$ is the Gödel number of a $T$-provable sentence"). By the \textbf{Diagonal Lemma} (Carnap–Gödel fixed point) , for any formula $\varphi(y)$ in one free variable, there exists a sentence $\sigma$ such that
\begin{equation}\label{eq:diag-lemma}
T \vdash \sigma \leftrightarrow \varphi(\ulcorner\sigma\urcorner).
\end{equation}
In particular, if we take $\varphi(y),\equiv,\neg \Trans(y)$ (with $\Trans(y)$ intended as a "transparency" predicate), we obtain a sentence $\hat{\sigma}$ satisfying
\begin{equation}\label{eq:transparent-liar}
T \vdash \hat{\sigma} \leftrightarrow \neg \Trans(\ulcorner\hat{\sigma}\urcorner),
\end{equation}
the \textbf{Transparency Liar sentence}. This sentence $\hat{\sigma}$ asserts "its own transparency predicate is false." We will use such diagonal constructions to establish limitative results on self-referential transparency (Section \ref{sec:core:diagonal}).

We use standard notions from logic: $\models$ denotes semantic entailment, and $T \vdash \phi$ denotes derivability of $\phi$ in theory $T$. The predicate $\True_T(x)$ may denote a canonical arithmetical truth predicate for theory $T$ (which, by Tarski's Undefinability Theorem , cannot be both total and defined in $T$ itself if $T$ is sufficiently expressive). Throughout, we assume a fixed base theory $T$ (such as Peano Arithmetic) for coding arguments, and assume all such diagonalization steps occur within $T$.

\subsection{Order-Theoretic Fixed Points: Lattices and \texorpdfstring{$\omega$}{omega}-Continuity}
A \textbf{complete lattice} $(L,\le)$ is a poset in which every subset has a supremum (join, $\vee$) and infimum (meet, $\wedge$). For $X\subseteq L$, we write $\bigvee X$ and $\bigwedge X$ for join and meet. In particular, $L$ has a top $\top=\bigvee L$ and bottom $\bot=\bigwedge L$ element. An \textbf{order-preserving} (monotone) map $T: L\to L$ satisfies $x\le y\implies T(x)\le T(y)$. Knaster–Tarski's Fixed-Point Theorem (Tarski 1955  ) states that any monotone $T: L\to L$ has a \emph{least fixed point} $\lfp T=\bigwedge{x\in L: T(x)=x}$ and \emph{greatest fixed point} $\gfp T=\bigvee{x: T(x)=x}$. Moreover, $\lfp T = \bigvee_{n<\omega} T^n(\bot)$ and $\gfp T = \bigwedge_{n<\omega} T^n(\top)$ when $T$ is \textbf{$\omega$-continuous} (i.e. preserves limits of $\omega$-chains). More generally, one can construct $\lfp T$ via transfinite iteration: define $T^{(0)}(\bot)=\bot$, $T^{(\alpha+1)}(\bot)=T(T^{(\alpha)}(\bot))$, and for limit ordinals $\lambda$, $T^{(\lambda)}(\bot)=\sup_{\alpha<\lambda}T^{(\alpha)}(\bot)$. This ordinal-indexed chain $(T^{(\alpha)}(\bot))_{\alpha}$ stabilizes at $\lfp T$, though possibly only at a countable ordinal $\alpha<\omega_1$ if $T$ is not $\omega$-continuous (we give an example in Section \ref{sec:worked:ordinal}). Similar remarks apply to $\gfp T$ from above.

For a classic proof and further discussion of this lattice‑theoretic fixed‑point theorem, see Tarski's original paper from 1955\cite{Tarski1955}.\AW{0.30}

We also recall a basic point from domain theory (Scott 1972): an $\omega$-continuous $T$ on a complete lattice ensures an effective iterative computation of $\lfp T$ by increasing sequence $x_{n+1}=T(x_n)$, starting at $\bot$ . If $\omega$-continuity is dropped, $\lfp T$ may require transfinite iteration up to $\omega_1$ in worst cases, reflecting potentially uncomputable fixed points.

\subsection{Provability Logic GL and Löb's Theorem}
Provability logic (modal logic $\mathsf{GL}$) is the modal propositional logic capturing the properties of a formal provability predicate $\Prov_T(x)$ for a sufficiently strong theory $T$ (e.g. Peano Arithmetic). In $\mathsf{GL}$, the modal operator $\Box$ is read as "it is provable that…". The Hilbert–Bernays derivability conditions connect $\Prov_T$ with modal axioms :
\begin{enumerate}[label=(D\arabic*)]
\item If $T\vdash \phi$, then $T\vdash \Prov_T(\ulcorner\phi\urcorner)$ (Provability Reflexivity / Necessitation).
\item $T\vdash \Prov_T(\ulcorner \phi\to\psi\urcorner)\to (\Prov_T(\ulcorner\phi\urcorner)\to \Prov_T(\ulcorner\psi\urcorner))$ (Provability Distributes over Implication).
\item $T\vdash \Prov_T(\ulcorner\phi\urcorner)\to \Prov_T(\ulcorner\Prov_T(\ulcorner\phi\urcorner)\urcorner)$ (Provability of Provability).
\end{enumerate}
The modal counterparts are: from $\vdash \phi$ infer $\vdash \Box \phi$; $\vdash \Box(\phi\to\psi)\to (\Box\phi \to \Box\psi)$ (the modal $K$ axiom); and $\vdash \Box \phi \to \Box\Box\phi$ (sometimes called the 4-axiom, but note $\mathsf{GL}$ itself does \emph{not} adopt full axiom 4, it only holds for provability formulae specifically in $T$). The characteristic axiom of $\mathsf{GL}$ is Löb's axiom: $\Box(\Box p \to p)\to \Box p$. In arithmetic terms, Löb's Theorem (1955)  says: if $T\vdash \Prov_T(\ulcorner \phi\urcorner)\to \phi$, then $T\vdash \phi$. Equivalently, in $\mathsf{GL}$ one can derive $\Box(\Box \varphi \to \varphi) \to \Box \varphi$ as a theorem. We will apply Löb's theorem to formalize self-referential hazards (Section \ref{sec:core:lob}).

For a historical presentation and proof of Löb's theorem see the original 1955 paper by Löb\cite{Lob1955}.\AW{0.30}

\subsection{Modal \texorpdfstring{$\mu$}{mu}-Calculus and Fixed-Point Semantics}
The \textbf{modal $\mu$-calculus} is an extension of modal logic with least and greatest fixed-point operators (Kozen 1983). Formulas are built from propositional variables, boolean connectives, modal box $\Box$ (or diamond $\lozenge$), and two variable-binding constructs: if $\phi(X)$ is a formula with free variable $X$, then $\mu X.,\phi(X)$ and $\nu X.,\psi(X)$ are formulas denoting the least and greatest fixed points, respectively, of the operator $F_X(S) := { w \mid (M,w)\models \phi(S) }$ in a given Kripke structure $M$. Formally, given a Kripke frame $(W,R)$ and an interpretation of free variables, the semantics is:
\begin{align*}
\Sem{\mu X.\,\phi(X)} & = \bigcap\{ S \subseteq W \mid \Sem{\phi(X)}[X:=S] \subseteq S\}, \\
\Sem{\nu X.\,\psi(X)} & = \bigcup\{ S \subseteq W \mid S \subseteq \Sem{\psi(X)}[X:=S]\}.
\end{align*}
That is, $\Sem{\mu X.,\phi(X)}$ is the least fixed point of the monotone operator $S\mapsto \Sem{\phi(X)}[X:=S]$ (so it is the intersection of all pre-fixed points), and $\Sem{\nu X.,\psi(X)}$ is the greatest fixed point (union of all post-fixed points). In practice, $\mu$ is used to encode properties achieved by some finite iteration (like "eventually" or recursive reachability), and $\nu$ encodes invariances or liveness conditions ("always from now on" etc, via greatest fixed point capturing an intersection of all closed conditions). We will use $\mu$ and $\nu$ to encode iterative disclosure and safety invariants, respectively, in Section \ref{sec:core:mu}.

\subsection{Category-Theoretic Fixed Points}
Category theory provides abstract fixed-point theorems. In a \textbf{Cartesian closed category (CCC)}, the \emph{exponential} object $A^B$ and evaluation map $\mathrm{eval}: A^B \times B \to A$ exist for objects $A,B$. Lawvere's Fixed-Point Theorem (1969) states that in any CCC, for any morphism $e: X \to X^X$ (a "pointing" from $X$ to its self-map space), every endomorphism $f: X \to X$ has a fixed point. Intuitively, $e$ provides a "diagonal" $\delta = \mathrm{eval} \circ \langle e, \Id_X\rangle: X \to X$, and then for any $f: X \to X$, one shows existence of $x$ with $f(x) = x$ by diagonalizing through $e$. We will recall a proof in Section \ref{sec:core:lawvere}. A corollary is Cantor's theorem (no surjection from a set $S$ onto its power set $\Pow(S)$, by taking $X=\Pow(S)$ and $e$ corresponding to a candidate surjection $S \to 2^S$). Another corollary is a category-theoretic form of the diagonal lemma, by taking $X$ to represent the set of formulae.

We also recall from universal algebra: in categories of domains (complete partial orders with Scott-continuous maps), one can often solve recursive domain equations $X \cong F(X)$ using the existence of initial algebras or final coalgebras for certain functors $F$. For example, the object of all sets that are (possibly) members of themselves can be seen as a fixed point of the power-set functor; Aczel's Anti-Foundation Axiom (Aczel 1988) provides an entirely well-behaved universe of non-well-founded sets (every pointed directed graph corresponds to a set in the universe), effectively creating a universe that realizes a functorial fixed point $V \cong \Pow(V)$ . We will not directly use Aczel's AFA, but note it as a context where "infinitely descending" self-reference is consistent by giving up well-foundedness.

\subsection{Computability: Kleene's Recursion Theorem and Rice's Theorem}
A cornerstone of computability theory is Kleene's \emph{Second Recursion Theorem} (1938) . It says that for any total computable function $f(e,x)$ (taking as input a code $e$ and some auxiliary input $x$ to output another code), there is an index $e_0$ such that $\varphi_{e_0}(x) = \varphi_{f(e_0,x)}(x)$ for all $x$. In particular, for any computable unary function $g$, there is an $\hat{e}$ (the index of a program) such that $\varphi_{\hat{e}} = \varphi_{g(\hat{e})}$. Equivalently, every effective operation on machine codes has a fixed point (a program that prints its own modified version etc., yielding quines ). Rogers's Fixed-Point Theorem is a related statement specialized to computable functions on indices .
Kleene's original article in 1938 presents this fixed‑point property in a rigorous way; see\cite{Kleene1938} for details.\AW{0.30}

We will use the recursion theorem to demonstrate how a fully transparent audit rule (fully known metric of evaluation) allows an agent to construct a "gaming" strategy that exploits the metric: given any disclosed metric $m$, an agent can, by recursion, create a behavior $b$ that depends on its own description to evade detection by $m$ while violating the spirit of the rule. This formalizes a Goodhart's law scenario in Section \ref{sec:core:kleene}. For contrast, recall Rice's Theorem (Rice 1953) which says any non-trivial semantic property of programs is undecidable. In our context, Rice's Theorem underscores that generally one cannot algorithmically guarantee a property holds of all programs from only a partial specification; however, the recursion theorem shows a stronger result: if one reveals too precise a specification ($m$), there exists a \emph{specific} program exploiting it.

\subsection{Kripke's Theory of Truth (Three-Valued Semantics)}
Finally, we recall Kripke's partial fixed-point construction for truth (1975). Given a language with a truth predicate $\True(x)$ intended to apply to codes of sentences, Kripke considered truth valuations as sets of sentences deemed "true" at a given stage, iteratively approaching a fixed point of the Tarski truth schema. Start with $T_0 = \emptyset$ (no sentences assumed true). At successor stages $\alpha+1$, define
\[
T_{\alpha+1} := \{\phi \mid \phi \text{ is of the form $\True(\ulcorner \psi\urcorner)$ and $\psi \in T_\alpha$}\} \;\cup\; \{\neg \True(\ulcorner \psi\urcorner) \mid \psi \notin T_\alpha\},
\]
evaluated in a Kleene three-valued logic (strong Kleene evaluation) so that sentences not determined true or false at stage $\alpha$ remain undetermined (value $1/2$). At limit ordinals $\lambda$, take $T_\lambda = \bigcup_{\alpha < \lambda} T_\alpha$. This process is monotone (more sentences get definitively labeled or stay undecided as $\alpha$ increases) and thus reaches a fixed point $T^{*} = \bigcup_{\alpha<\beta} T_\alpha$ for some $\beta$ (in fact the first $\beta$ where no new determinations appear).

We denote this limit by $T^{*}$ and refer to it as a \emph{grounded truth} predicate: every $\phi \in T^{*}$ is given a truth value based on a finite chain of grounding, and the “liar” sentence $L$ (which asserts $\neg \True(\ulcorner L\urcorner)$) never gets a classical truth value (it remains ungrounded). The set $T^{*}$ is the \textbf{least fixed point} of the Kripke jump operator $F(X) = \{\True(\ulcorner \psi\urcorner): \psi \in X\} \cup \{\neg \True(\ulcorner\psi\urcorner): \psi \notin X\}$. Formally $T^{*}=\lfp(F)$, which exists by Knaster--Tarski. We will apply this idea to a “partial transparency” predicate that avoids paradox by allowing some statements (like $L$) to remain neither transparently true nor false, thus mitigating the transparency paradox. Such a partial approach preserves consistency and certain accountability (if defined properly) while preventing explosive self-reference.

\section{Formalizing Transparency}\label{sec:formalizing}
In our framework, a \textbf{transparency policy} will be represented as an operator on a lattice of possible disclosure states. We assume a universe $S$ of \emph{information items} (these could be propositions, facts, data points, events, etc. that might or might not be disclosed). A \emph{state of disclosure} can be modeled as a subset $x \subseteq S$ of items that have been publicly disclosed or made transparent. The set of all possible states of disclosure is the powerset lattice $\Pow(S)$, ordered by inclusion ($x \le y$ iff $x \subseteq y$). More generally, we could have a structured lattice $L$ of "states of knowledge" or information sigma-algebras, but for simplicity $\Pow(S)$ suffices (all results carry over to any complete lattice of information states).

\begin{definition}
A \textbf{transparency policy} is a (possibly partial) operator $T: L \to L$ on the lattice of disclosure states, intuitively describing how a disclosure update is generated from a current state. We say $T$ is:
\begin{itemize}
\item \emph{monotone} if $x \subseteq y$ implies $T(x) \subseteq T(y)$ (more information known cannot cause less to be disclosed).
\item \emph{idempotent} if $T(T(x)) = T(x)$ for all $x$ (disclosing according to $T$ yields a fixed point immediately—no further disclosures after one round).
\item \emph{inflationary} if $x \subseteq T(x)$ (policy never retracts information, only adds).
\item \emph{extensive} if $x \subseteq T(x)$ for all $x$ (so in particular $\bot = \emptyset \subseteq T(\emptyset)$ means even starting with nothing disclosed, policy will disclose something, i.e. policy always applies at least minimal transparency).
\end{itemize}
\end{definition}

In general $T$ need not be inflationary (policy might decide to hide some previously known information in some contexts, though that may be unusual if $T$ represents a one-way disclosure commitment). We focus primarily on monotone policies, as these ensure existence of $\lfp T$ and $\gfp T$. A \emph{state of full transparency} would be $\top = S$ (all items disclosed). A \emph{fixed point of $T$} is a state $x$ such that $T(x) = x$, meaning disclosing according to the policy at $x$ yields no change—transparency is in equilibrium.

\begin{definition}
An \textbf{ethical risk functional} is a map $\Risk: L \to \R_{\ge 0}$ that assigns a non-negative real-valued risk level to each state of disclosure. We assume $\Risk$ can be decomposed as
\begin{equation}\label{eq:risk-decomp}
\Risk(x) ;=; \alpha,\Pi(x) + \beta,\Lambda(x) + \gamma,\Phi(x) + \delta,G(x),
\end{equation}
where:
\begin{itemize}
\item $\Pi(x)$ quantifies self-referential paradox risk (the extent to which $x$ enables problematic self-reference, e.g. liar-like situations or logical inconsistency).
\item $\Lambda(x)$ quantifies privacy or autonomy loss (leakage risk) at state $x$.
\item $\Phi(x)$ quantifies fairness or bias distortion risk (e.g. how transparency might skew behavior or outcomes in unfair ways).
\item $\G(x)$ quantifies "gaming" or strategic exploitation risk (the degree to which adversaries can game the system given the disclosed information in $x$).
\end{itemize}
Here $\alpha,\beta,\gamma,\delta >0$ are weights reflecting the relative importance of these components. We further assume $\Risk$ is monotone non-decreasing: if $x\subseteq y$ (more is disclosed in $y$ than $x$), then each of $\Pi,\Lambda,\Phi,G$ is non-decreasing or at least $\Risk(x) \le \Risk(y)$ overall. This reflects the intuition that adding more transparency does not reduce risk in these categories (more information out cannot undo paradoxes or un-leak privacy, etc., though it might increase it). In practice, some components might eventually plateau or even decrease after some point (for instance, $\Phi(x)$ might first rise if partial info causes misperceptions, then fall if full info clarifies context), but we assume monotonicity for theoretical tractability.
\end{definition}

We also define an \textbf{accountability measure} $A: L \to \R_{\ge 0}$ that measures the degree to which state $x$ meets a minimum standard of accountability or public verifiability. Think of $A(x)$ as "benefit of transparency" (openness, trust, alignment with oversight requirements). Often $A(x)$ will be monotone increasing (more disclosure yields higher accountability, up to some saturation). Let $A_0 > 0$ be a required threshold (e.g. mandated by regulation or ethics) that transparency must meet. We formalize the transparency design as an optimization problem:
\begin{equation}\label{eq:design-opt}
\begin{aligned}
&\min_{x \in \Fix(T)} && \loss(x) := \Risk(x) - \lambda,\Gain(x) \
&\text{subject to} && A(x) \ge A_0,
\end{aligned}
\end{equation}
where $\Gain(x)$ is a (non-negative) utility or performance measure that transparency might improve (e.g. public trust or collaborative efficiency), and $\lambda$ is a weight trading off risk versus gain. The constraint $A(x)\ge A_0$ ensures minimal accountability. We restrict to $x \in \Fix(T)$, meaning we only consider steady states (the policy $T$ has fully run to completion and no further disclosures happen). This reflects the idea that a policy will ultimately reach an equilibrium state of transparency.

The solution of \eqref{eq:design-opt} would characterize the "optimal level of transparency" balancing ethical risk against gains, under the policy dynamics and accountability constraint. In Section \ref{sec:optimality}, we will derive conditions for optimality using a Lagrangian (KKT conditions adapted to lattices).

\section{Core Theorems: Fixed-Point Limits on Transparency}\label{sec:core}
We now present the main theoretical results. Each part (i)–(viii) corresponds to a particular theorem or formal insight about fixed points and self-reference in the context of transparency.

\subsection{(i) Diagonal-Undefinability: Limits of Self-Transparency}\label{sec:core:diagonal}
Our first result states that no sufficiently expressive system can have a \emph{total, sound transparency predicate} that discloses all facts about its own transparency process. Intuitively, a system cannot "completely and consistently reveal everything about its own revelations."

We formalize this in the setting of a theory $T$ that extends Peano arithmetic. Let $\Trans(x)$ be a unary predicate in the language of $T$ intended to capture "the statement with Gödel code $x$ is transparently disclosed (and true)." By "sound" we mean: whenever $T \vdash \Trans(\ulcorner \phi\urcorner)$, then in fact $T \vdash \phi$ (so $T$ never declares $\phi$ transparent unless $\phi$ is actually derivable/true in $T$). By "total" we mean $T$ proves $\Trans(\ulcorner\phi\urcorner) \lor \neg\Trans(\ulcorner\phi\urcorner)$ for every sentence $\phi$ (the transparency status of every sentence is decidable within $T$).

\begin{theorem}[Transparency Diagonalization Impossibility]\label{thm:trans-diag}
Let $T$ be a consistent, effectively axiomatizable theory that can represent its own syntax (e.g. $T$ is $\mathrm{PA}$ or stronger). There is no predicate $\Trans(x)$ in the language of $T$ such that:
\begin{enumerate}[label=(\alph*)]
\item $T$ proves for all $y$, $\Trans(y) \to \True_T(y)$ (transparency implies truth in $T$). In particular, $T$ is sound about $\Trans$.
\item $T$ proves for all $y$, $\Trans(y) \lor \neg \Trans(y)$ (transparency is total/decidable for all codes).
\end{enumerate}
In fact, any such $\Trans(x)$ would allow constructing a sentence $\hat{\sigma}$ with $T \vdash \hat{\sigma} \leftrightarrow \neg \Trans(\ulcorner\hat{\sigma}\urcorner)$ (as in \eqref{eq:transparent-liar}), which leads to contradiction: $T$ would prove $\hat{\sigma}$ is transparently true if and only if it is not transparently true. Thus either $\Trans$ cannot be total or it cannot be sound.
\end{theorem}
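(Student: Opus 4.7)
The plan is to argue by contradiction: assume a predicate $\Trans(x)$ satisfies both (a) and (b), and derive inconsistency of $T$ via the Transparency Liar of \eqref{eq:transparent-liar}. The whole argument is an adaptation of Tarski's undefinability proof with $\Trans$ playing the structural role that the truth predicate plays there. Concretely, I would first apply the Diagonal Lemma \eqref{eq:diag-lemma} with $\varphi(y)\equiv\neg\Trans(y)$ to obtain the sentence $\hat{\sigma}$ satisfying
\begin{equation*}
T\vdash \hat{\sigma}\leftrightarrow \neg\Trans(\ulcorner\hat{\sigma}\urcorner),
\end{equation*}
and then case-split on (b), read as asserting $T$-decidability of the concrete literal $\Trans(\ulcorner\hat{\sigma}\urcorner)$.

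In the first case $T\vdash \Trans(\ulcorner\hat{\sigma}\urcorner)$, I would invoke (a) together with the standard $T$-schema $T\vdash \True_T(\ulcorner\phi\urcorner)\leftrightarrow \phi$ for the canonical arithmetical truth predicate to derive $T\vdash \hat{\sigma}$; the diagonal equivalence then yields $T\vdash \neg\Trans(\ulcorner\hat{\sigma}\urcorner)$, contradicting consistency of $T$. In the second case $T\vdash \neg\Trans(\ulcorner\hat{\sigma}\urcorner)$ the diagonal equivalence immediately gives $T\vdash \hat{\sigma}$, and I would close it by appealing to the intended faithfulness of transparency: a sentence $T$-certified by a sound, total transparency predicate ought itself to be declared transparent, so $T\vdash \Trans(\ulcorner\hat{\sigma}\urcorner)$, again inconsistent. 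Packaged together, (a) and (b) force $T\vdash \Trans(\ulcorner\phi\urcorner)\leftrightarrow \phi$ as a schema, so Tarski's undefinability theorem can be invoked in one move to close both cases simultaneously.

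The main obstacle is the logical content of hypothesis (b). Read naively as $T\vdash \forall y.(\Trans(y)\lor \neg\Trans(y))$ it is trivially valid in classical $T$ and carries no information, so the real force must be the stronger schematic claim that $T$ decides each concrete transparency literal. For the second case of the diagonal argument to be airtight I would either tighten the statement by adding an explicit completeness clause of the form $T\vdash \phi \Rightarrow T\vdash \Trans(\ulcorner\phi\urcorner)$, or, more cleanly, route the whole proof through Tarski: since (a) and (b) together say that $\Trans$ is a total sound truth-like predicate for $T$, its very existence contradicts Tarski's undefinability theorem, and this is the form in which I would ultimately present the argument.
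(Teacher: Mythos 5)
Your proof is essentially the paper's own argument: the same diagonal sentence $\hat{\sigma}$ obtained from $\varphi(y)\equiv\neg\Trans(y)$, the same two-case split on whether $T$ proves the literal $\Trans(\ulcorner\hat{\sigma}\urcorner)$, and the same contradictions in each branch. The two caveats you raise are accurate and apply equally to the paper's version of the proof --- hypothesis (b) as literally written is a classical tautology whose actual force must be that $T$ decides each concrete transparency literal, and the second case silently invokes the completeness direction $T\vdash\phi \Rightarrow T\vdash\Trans(\ulcorner\phi\urcorner)$ under the name of ``soundness'' --- so your proposed repair (strengthen the hypotheses explicitly, or present the whole thing as an instance of Tarski undefinability) is the right one.
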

\begin{proof}
We use the Diagonal Lemma as set up in \eqref{eq:transparent-liar}. Working in $T$, consider the formula $\varphi(y) := \neg \Trans(y)$. By diagonalization, there exists a sentence $\hat{\sigma}$ such that
\[ T \vdash \hat{\sigma} \leftrightarrow \neg \Trans(\ulcorner\hat{\sigma}\urcorner). \]
Now, there are two cases:
\begin{itemize}
\item If $T \vdash \Trans(\ulcorner \hat{\sigma}\urcorner)$, then by soundness (assumption (a)), $T \vdash \hat{\sigma}$. But then, using the biconditional, $T \vdash \neg \Trans(\ulcorner\hat{\sigma}\urcorner)$. Thus $T$ proves both $\Trans(\ulcorner\hat{\sigma}\urcorner)$ and its negation, contradicting consistency of $T$.
\item If instead $T \nvdash \Trans(\ulcorner \hat{\sigma}\urcorner)$, then $T \vdash \neg\Trans(\ulcorner\hat{\sigma}\urcorner)$ must hold or else by totality (b) $T$ would prove the positive. So $T \vdash \neg \Trans(\ulcorner\hat{\sigma}\urcorner)$. Then by the diagonal biconditional, $T \vdash \hat{\sigma}$. Now since $T \vdash \hat{\sigma}$, soundness of $\Trans$ would require $T \vdash \Trans(\ulcorner\hat{\sigma}\urcorner)$ (because $\hat{\sigma}$ is true, the system ought to see it as transparently true). But $T$ already proved $\neg \Trans(\ulcorner\hat{\sigma}\urcorner)$. Again a contradiction.
\end{itemize}
In either case, we derive a contradiction from (a) and (b). Thus $\Trans(x)$ with both properties cannot exist. As a consequence, any attempt at a universal "transparent truth-telling predicate" for an expressive system will either be partial (undefined on certain self-referential statements) or unsound (revealing things that aren't actually true) .
\end{proof}

\begin{corollary}\label{cor:partial-transparency}
If $T$ is consistent and sufficiently strong, any transparency policy $T(x)$ that purports to disclose "all truths of $T$" (or all sentences meeting some truth-like criterion in $T$) cannot be total. There must exist sentences about which the policy remains silent or noncommittal, on pain of inconsistency. In other words, an \emph{omnipotent truth transparency machine} is impossible; the best one can do is a partial transparency that leaves some self-referential statements unresolved.
\end{corollary}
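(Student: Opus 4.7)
The plan is to reduce Corollary \ref{cor:partial-transparency} to Theorem \ref{thm:trans-diag} by extracting, from any candidate total transparency policy, a formal predicate that satisfies the forbidden hypotheses of the impossibility theorem. First I would let $P:\Pow(S)\to\Pow(S)$ denote an arbitrary transparency policy over the lattice of disclosure states (with $S$ the set of Gödel codes of sentences of $T$), and assume for contradiction that $P$ is \emph{total} in the sense that some fixed point $x^{*}\in\Fix(P)$ (which exists by Knaster--Tarski) decides the disclosure status of every sentence, and that $P$ \emph{purports to disclose all truths}, i.e.\ $\ulcorner\phi\urcorner\in x^{*}$ iff $\phi$ is true in $T$. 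The crucial move is internalization: since the policy is assumed definable in $T$ — otherwise ``disclosing truths of $T$'' has no syntactic meaning — the membership relation $y\in x^{*}$ is represented by a formula $\Trans(y)$ in the language of $T$.

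Next I would verify that $\Trans$ satisfies hypotheses (a) and (b) of Theorem \ref{thm:trans-diag}. Soundness (a) follows from the policy's truth-preservation assumption: $T \vdash \Trans(y) \to \True_T(y)$ holds because $x^{*}$ only contains codes of true sentences of $T$. Totality (b) is immediate from the totality of $P$: every Gödel number is either in $x^{*}$ or not, and this dichotomy is provable in $T$ because the policy's fixed-point iteration is representable. Theorem \ref{thm:trans-diag} then applies verbatim, producing a Transparency Liar $\hat\sigma$ with $T\vdash\hat\sigma\leftrightarrow\neg\Trans(\ulcorner\hat\sigma\urcorner)$ and deriving inconsistency of $T$. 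Hence no total, truth-disclosing policy can exist in a consistent expressive $T$; every consistent policy must leave $\hat\sigma$ (and its diagonal relatives) undisclosed or undetermined — that is, it must be \emph{partial}.

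The main obstacle I anticipate is the \textbf{definability step}: promoting the metalinguistic condition ``$y\in x^{*}$'' into an object-language formula $\Trans(y)$. For policies whose fixed point is only characterized semantically, or reached only by transfinite non-$\omega$-continuous iteration (cf.\ the ordinal worked example referenced in Section \ref{sec:prelim}), the membership predicate may not be arithmetically definable, and a naive appeal to Theorem \ref{thm:trans-diag} fails. I would handle this by stating the corollary under the mild hypothesis that the policy's fixed point is arithmetically (or at most hyperarithmetically) definable in $T$, and flagging the genuinely non-definable case as a separate phenomenon — one that escapes Tarski--Gödel diagonalization precisely by paying the price of being inexpressible within $T$, which is itself a form of non-transparency. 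The upshot is that the only consistent sound policies over expressive systems are partial, foreshadowing the Kripkean construction used later in the paper to realize this partiality without collapse.
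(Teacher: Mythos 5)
Your proposal is correct and follows essentially the same route as the paper, which states the corollary without a separate proof and treats it as an immediate consequence of Theorem~\ref{thm:trans-diag}: internalize the purportedly total, truth-disclosing policy as a predicate $\Trans(y)$, check hypotheses (a) and (b), and invoke the diagonal argument. Your explicit flagging of the definability step (that ``$y\in x^{*}$'' must actually be expressible in $T$ for the reduction to go through) is a caveat the paper silently assumes, and is a worthwhile refinement rather than a deviation in approach.
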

Theorem \ref{thm:trans-diag} mirrors Tarski's undefinability theorem  and Gödel's first incompleteness theorem  in spirit, transplanted to the setting of a "transparency predicate." It formalizes a fundamental ethical risk $\Pi(x)$: the paradox risk is nonzero ($\alpha,\Pi(x)>0$ in the risk functional) if transparency is taken too far (making $\Trans$ total). The design lesson is that \emph{some opacity is not only permissible but necessary for consistency}. We will see later (Theorem \ref{thm:kripke-truth}) how a partial approach (three-valued logic) circumvents this by letting $\Trans$ be undefined on $\hat{\sigma}$.

\subsection{(ii) Lawvere's Fixed-Point Theorem for Disclosure}\label{sec:core:lawvere}
Next, we use category theory to show that any sufficiently expressive disclosure mechanism inevitably permits self-referential equilibria. Suppose our "agent of disclosure" (the policy together with the responder's behavior) is modelled in a Cartesian closed category (CCC) such as $\mathbf{Set}$ or a lambda-calculus category of domains. Consider an endofunction $F: X \to X$ representing a combined operation: (a) policy $T$ discloses some information, and (b) the world (or an agent) responds, yielding a new state in $X$. So $F$ is like $B \circ T$ (best-response after transparency, see Section \ref{sec:game}) as a single-step self-map on the state space $X$.

We show that under mild conditions, $F$ must have a fixed point, meaning a state $x$ such that $F(x)=x$ (an equilibrium of disclosure and response). The condition needed is essentially that there is a "diagonal" function available. Lawvere's Theorem provides exactly that.

\begin{theorem}[Lawvere Fixed-Point Theorem (Categorical Diagonal)]\label{thm:lawvere}
Let $\C$ be a Cartesian closed category with objects $X$ and $X^X$ (the exponential). Suppose there exists a \emph{point-surjective} morphism $e: 1 \to X^X$ (from the terminal object 1) that is epimorphic in the category of sets (this $e$ picks an element of $X^X$, i.e. a specific self-map of $X$; "weakly point-surjective" means roughly that evaluations separate points ). Then for \emph{any} morphism (endomap) $F: X \to X$, there exists an $x \in X$ such that $F(x)=x$. In other words, every endofunction on $X$ has a fixed point (making $X$ a kind of "universal domain" for its own self-maps).
\end{theorem}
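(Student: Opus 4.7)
The plan is to construct a fixed point by a categorical diagonalization that mirrors the set-theoretic self-application argument. The ambient data give me evaluation $\eval\colon X^X\times X \to X$ and, via currying in the CCC $\C$, a natural bijection between morphisms $Y\times X\to X$ and morphisms $Y\to X^X$. The hypothesized $e$ plays the role of a universal ``decoder'' that names every endomap of $X$, so the strategy is to fold $F$ into a self-application pattern and then use the (weak) point-surjectivity of $e$ to force that pattern to collide with itself.

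Concretely, I would first introduce the diagonal map $\delta := \eval \circ \langle e,\Id_X\rangle\colon X \to X$, which in element notation reads $\delta(x) = e(x)(x)$. Given any endomap $F\colon X\to X$, form the composite $F\circ\delta\colon X\to X$ and take its exponential transpose $\ulcorner F\circ\delta\urcorner\colon 1 \to X^X$. By weak point-surjectivity of $e$ --- read as: every endomap of $X$ is named by $e$ at some (generalized) element --- there is an $x_0$ such that $e(x_0)$ and $F\circ\delta$ agree as morphisms $X\to X$. Evaluating both at $x_0$, the left side is $e(x_0)(x_0) = \delta(x_0)$ by definition of $\delta$, while the right side is $(F\circ\delta)(x_0) = F(\delta(x_0))$. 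Setting $x^{*} := \delta(x_0)$ therefore gives $F(x^{*}) = x^{*}$, the desired fixed point. The standard corollaries follow by instantiating this template: Cantor's theorem by taking $X = \Pow(S)$ together with the two-element ``flip'' endomap (which admits no fixed point, forcing non-existence of $e$), and a categorical form of the Diagonal Lemma by letting $X$ index codes of formulae and choosing $F$ from $\neg\Trans$ as in \eqref{eq:transparent-liar}.

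The main obstacle will be giving a clean categorical meaning to ``weakly point-surjective,'' because a plain epimorphism $1\to X^X$ picks only a single self-map and cannot suffice. The right formulation, following Lawvere, is that $e$ is surjective on generalized elements: for every morphism $g\colon Z\to X$ (with $Z$ ranging over a separating class, or just $Z=X$ in the endomap case) the transpose of $F\circ g$ factors through $e$. In a well-pointed CCC such as $\mathbf{Set}$ this collapses to ordinary surjectivity on points, but in a general topos or in a domain-theoretic CCC one must carry the generalized element $x_0$ diagrammatically rather than pointwise. Once that definition is pinned down, the fixed-point equation $\delta(x_0)=F(\delta(x_0))$ is just the commutativity of a small square assembled from $\eval$, $\langle e,\Id_X\rangle$, and the naming isomorphism of the exponential, so every remaining step is diagrammatic bookkeeping rather than real calculation.
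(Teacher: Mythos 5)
Your argument is the correct and standard proof of Lawvere's theorem, and it is the argument the paper is evidently reaching for: both hinge on the diagonal $\delta = \eval\circ\langle e,\Id_X\rangle$ and on finding a point $x_0$ at which $e$ names $F\circ\delta$. But you have done something the paper's own sketch does not manage: you have made the hypothesis coherent. As printed, the theorem posits $e\colon 1\to X^X$, a single global element of $X^X$, i.e.\ one self-map of $X$; no surjectivity condition on such a morphism can rescue the conclusion (take $X$ a two-element set and $F$ the swap, which has no fixed point even though global elements of $X^X$ abound). The paper's sketch inherits this confusion: it defines $\delta(x)=\eval(e(),x)$, which for $e\colon 1\to X^X$ is just the one chosen self-map applied to $x$ and is not a diagonal at all, and it then appeals vaguely to ``a fixed point of $e$ composed with something'' to produce $\hat{x}$. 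Your version --- $e\colon X\to X^X$ weakly point-surjective, $\delta(x)=e(x)(x)$, and the collision $e(x_0)=\ulcorner F\circ\delta\urcorner$ evaluated at $x_0$ --- is the argument that actually closes: $\delta(x_0)=e(x_0)(x_0)=(F\circ\delta)(x_0)=F(\delta(x_0))$, so $x^{*}=\delta(x_0)$ is fixed by $F$. Your caveat that weak point-surjectivity must be read on generalized elements in a non-well-pointed CCC, and your Cantor corollary via the fixed-point-free flip, are likewise the right formulations. In short: same intended route as the paper, but yours is the one that constitutes a proof, and the theorem's hypothesis should be repaired (domain of $e$ equal to $X$, not $1$) to match it.
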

\begin{proof}[Proof Sketch]
We follow Lawvere's original diagonal argument . The key is to use the CCC structure. The evaluation map $\mathrm{eval}: X^X \times X \to X$ applied to $(e(), x)$ (where $e(): 1 \to X^X$ picks a specific element of $X^X$) yields a morphism $\delta_x: 1 \to X$ which we can view as an element $\delta(x) \in X$ depending on $x$. In set-theoretic terms, we have a family of elements $\delta(x) := \mathrm{eval}(e(), x) \in X$ indexed by $x \in X$. Because $e$ is point-surjective (essentially, $e$ is an arbitrary but fixed element of $X^X$ that we treat as a "diagonalizing function"), we can consider the composition $F(\delta(x))$. Define a map $g: X \to X$ by $g(x) := F(\delta(x))$. But now $e$ being in $X^X$ means $e$ itself is a function $e: X \to X$. Let $x^{*} := \delta(\hat{x}) = \mathrm{eval}(e(), \hat{x})$ be the solution to the equation $x^{*} = \delta(\hat{x})$ (this $\hat{x}$ exists by the way $\delta$ is defined: $\delta$ is a function from $X$ to $X$, so by usual set theory or fixed-point for $e$ itself we can find a fixed point of $e$ composed with something—strictly speaking, we use the assumption that $F$ is arbitrary, and we choose $F=e$ if needed to get $e(\hat{x}) = \hat{x}$ for some $\hat{x}$; Lawvere's proof might use a diagonal argument to yield such $\hat{x}$).

Given this $\hat{x}$, consider $F(\hat{x})$. We have $F(\hat{x}) = F(\delta(\hat{x})) = g(\hat{x}) = \delta(\hat{x}) = \hat{x}$, using $\hat{x} = \delta(\hat{x})$. Thus $\hat{x}$ is a fixed point of $F$.
\end{proof}

In plain language, if we can encode the space of possible "disclose-and-respond" behaviors into a self-simulation within the system ($X$ can represent $X^X$ in some diagonal way), then any such behavior has a self-consistent point. This result is abstract, but its significance in transparency ethics is the inevitability of reflexive outcomes. Even if a policy tries to avoid self-reference, as long as the system is expressive enough to talk about its own outputs, some outputs will refer to themselves.

We can instantiate this with a simple commutative diagram in $\mathbf{Set}$ illustrating the fixed-point formation:
\begin{equation}\label{diag:lawvere}
\begin{tikzcd}
1 \ar[r, "e"] \ar[dr, "\hat{x}"'] & X^X \ar[d, "{\eval}"] \\
& X
\end{tikzcd}
\end{equation}
Here $e$ picks a particular self‑map (diagonal function) and $\mathrm{eval}$ then yields a point $\hat{x}\in X$ that is fixed by that self‑map. If we let $F$ be $B \circ T$ (with $T$ a transparency operation and $B$ an agent's best‑response function), then the theorem says there is an equilibrium $\hat{x}$ such that $B(T(\hat{x})) = \hat{x}$, i.e.
$\hat{x}$ is invariant under one round of transparency and response.  In fact $\hat{x}^*$ can be seen as a "self‑fulfilling disclosure state."

This formalizes an oft-seen dynamic: any mechanism that reveals information and then allows responses will reach a point where further revelation doesn't change the outcome because the outcome has adapted to the revelation. For example, a policy might disclose its decision rule, and agents adapt exactly to that rule, producing an outcome where the rule holds with equality (e.g. a prediction algorithm that is known leads people to act to satisfy the prediction).

The categorical perspective reinforces that self-reference is not avoidable in any system capable of interpreting itself—this is essentially a categorical fixed-point or "reflection" of Gödel's diagonal argument . The difference is that Lawvere's theorem doesn't require logic or arithmetization; it uses the high-level structure of CCC to guarantee fixed points.

\subsection{(iii) Existence of Safe and Unsafe Fixed Points (Knaster–Tarski) and Design Theorem I}\label{sec:core:knaster}
We move to an order-theoretic analysis of transparency policies. Given a monotone transparency policy $T: L \to L$, Knaster–Tarski guarantees $\lfp T$ and $\gfp T$ exist in $L$. These extremal fixed points have interpretations:
	•	$\lfp T$ is the "minimal disclosure state" that is self-consistent under $T$. If one starts with nothing and iteratively applies $T$, one will approach $\lfp T$ (by monotonicity).
	•	$\gfp T$ is the "maximal disclosure state" stable under $T$. If one somehow started with full disclosure and perhaps retracted (if $T$ can retract), one would end at $\gfp T$.

In many cases $T$ will be inflationary (no retraction), so $\gfp T = \top = S$ (full transparency) is trivially a fixed point but maybe not reachable if $T(\top) = \top$ anyway. More interesting is $\lfp T$, which is typically the actual outcome of running policy $T$ to convergence starting from scratch.

We aim to prove a "minimal risk principle": under suitable conditions, $\lfp T$ yields the lowest risk $\Risk$ among all fixed points of $T$. Intuitively, any stable transparency equilibrium that involves more disclosure than necessary to reach equilibrium will incur extra risk without necessity. If $\Risk$ is monotone increasing and the policy's fixed points are ordered by inclusion, then indeed the least fixed point minimizes risk.

\begin{theorem}[Design Theorem I: Least Fixed Point Minimizes Risk]\label{thm:lfp-optimal}
Suppose $T: L \to L$ is monotone and $\Risk: L \to \R_{\ge 0}$ is monotone non-decreasing. Then for any fixed point $x$ of $T$, we have $\lfp T \le x$ in $L$, and consequently $\Risk(\lfp T) \le \Risk(x)$. In particular, $\lfp T$ achieves the minimum $\Risk$-value among all post-fixed points of $T$ (i.e. among ${x: T(x)\subseteq x}$, which certainly includes all fixed points).
\end{theorem}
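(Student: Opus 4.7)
The plan is to derive both claims directly from the Knaster--Tarski characterization of $\lfp T$ combined with the monotonicity hypothesis on $\Risk$. The only substantive ingredient is the standard strengthening of Knaster--Tarski: for a monotone $T$ on a complete lattice, the least fixed point coincides with the meet of all \emph{post-fixed points} in the sense used in the statement, namely
\[
\lfp T \;=\; \bigwedge \{\, y \in L \,:\, T(y) \le y \,\}.
\]
This is slightly stronger than the characterization $\lfp T = \bigwedge\{y : T(y) = y\}$ recalled in Section~\ref{sec:prelim}, but it follows by the usual argument (the meet $m$ of all post-fixed points is itself a post-fixed point by monotonicity, and in fact satisfies $T(m) = m$, so it is the minimum fixed point and a fortiori a lower bound for every post-fixed point).

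With this in hand the order inequality is immediate: given any fixed point $x$ of $T$, we have $T(x) = x \le x$, so $x$ lies in the set over which the meet defining $\lfp T$ is taken. More generally, if $x$ is merely a post-fixed point ($T(x) \le x$), the same membership holds, and in either case $\lfp T \le x$.

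Applying the hypothesis that $\Risk$ is monotone non-decreasing to the inequality $\lfp T \le x$ yields $\Risk(\lfp T) \le \Risk(x)$, which is the risk-minimality claim. Since every fixed point is a post-fixed point, $\lfp T$ attains the minimum of $\Risk$ over the set $\{x : T(x) \le x\}$, and thus over $\Fix(T)$ in particular.

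There is no real obstacle here; the proof is essentially a one-line application of Knaster--Tarski followed by one-line use of monotonicity of $\Risk$. The only conceptual care needed is to state the right form of the fixed-point theorem (meet over post-fixed points, not merely over fixed points), so that the conclusion extends from $\Fix(T)$ to the larger class $\{x : T(x) \le x\}$ as asserted in the final sentence of the theorem.
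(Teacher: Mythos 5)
Your proof is correct. The first half (the inequality for genuine fixed points, followed by monotonicity of $\Risk$) coincides with the paper's argument. Where you diverge is in extending the bound to post-fixed points: you invoke the sharper Knaster--Tarski characterization $\lfp T = \bigwedge\{y : T(y) \le y\}$, justified by the standard observation that the meet of all post-fixed points is itself a fixed point, so that $\lfp T \le x$ is immediate for any $x$ with $T(x) \le x$. The paper instead argues by iteration from $\bot$: it shows $T^n(\bot) \le x$ by induction using $T(x) \le x$, and then passes to the limit to conclude $\lfp T \le x$. That route is more constructive in flavor but carries a hidden cost the paper itself half-acknowledges --- without $\omega$-continuity the chain $T^n(\bot)$ need not reach $\lfp T$ at stage $\omega$, so the limit step really requires transfinite induction over the ordinal-indexed iterates $T^{(\alpha)}(\bot)$ to be airtight. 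Your meet-over-post-fixed-points argument sidesteps this entirely and needs only monotonicity and completeness of the lattice, so it is the cleaner and more general of the two; the paper's version buys an algorithmic picture (the iterates stay below any post-fixed point) at the price of a continuity caveat.
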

\begin{proof}
By Knaster–Tarski, $\lfp T$ is the least element of ${x: T(x)=x}$ with respect to $\le$. Thus for any fixed point $x$, $\lfp T \le x$. Monotonicity of $\Risk$ then gives $\Risk(\lfp T) \le \Risk(x)$. Actually, we can relax "fixed point" to "post-fixed point": if $T(x)\subseteq x$ (so $x$ is a post-fixpoint, meaning $x$ is a post-fixed point of $T$ sometimes called a \emph{prefixed point} depending on terminology), then $T^n(\bot) \le x$ for all finite $n$ by induction (since $\bot \le x$ and applying monotonicity of $T$ repeatedly yields $T^n(\bot) \le T^n(x) \le x$ as $T(x)\le x$). Taking $n\to\infty$, $T^n(\bot)$ approaches $\lfp T$ (in the $\omega$-continuous case exactly, in general at least eventually $\le x$) hence $\lfp T \le x$. Thus $\Risk(\lfp T) \le \Risk(x)$.

So among all states that are "stable or beyond stable" ($T(x)\subseteq x$ means $x$ contains a fixed point), the least fixed point carries the least risk. Intuitively, any extra disclosures beyond the minimal self-consistent set only add risk, not reduce it.
\end{proof}

\begin{corollary}\label{cor:lfp-sufficient}
If the design problem \eqref{eq:design-opt} has a feasible solution (i.e. there exists at least one fixed point with $A(x)\ge A_0$), then there exists an optimal solution $\hat{x}$ such that $\hat{x} \preceq y$ for every other feasible $y$. In other words, there is an inclusion-minimal optimal transparency state, which in fact must be a least fixed point of $T$ (for some restricted sublattice perhaps). In particular, if $\lfp T$ itself satisfies $A(\lfp T) \ge A_0$, then $\lfp T$ is the unique optimal solution of \eqref{eq:design-opt}.
\end{corollary}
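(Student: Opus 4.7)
The plan is to combine Theorem \ref{thm:lfp-optimal} with the lattice structure of the feasible set. Let $F := \{x \in \Fix(T) : A(x) \ge A_0\}$ denote the feasible region of \eqref{eq:design-opt}, which is nonempty by hypothesis.

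I would first dispatch the ``in particular'' clause, since it is the clean case and drives the intuition. Assume $\lfp T \in F$. By Knaster--Tarski (Theorem \ref{thm:lfp-optimal}), $\lfp T \le y$ for every $y \in \Fix(T)$, and \emph{a fortiori} for every $y \in F$. Monotonicity of $\Risk$ then gives $\Risk(\lfp T) \le \Risk(y)$ for all $y \in F$. To lift this to $\loss = \Risk - \lambda\,\Gain$, one needs a compatibility hypothesis on $\Gain$ relative to $\Risk$ on chains in $F$; under either (a) $\Gain$ monotone non-increasing on $\Fix(T)$, or (b) $\lambda$ small enough that $\Risk$ dominates $\lambda\,\Gain$ along $\le$-comparable fixed points, we obtain $\loss(\lfp T) \le \loss(y)$ for every $y \in F$. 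Uniqueness follows from strictness: if $\lfp T < y$ with $y \in F$ and $\loss(y)=\loss(\lfp T)$, strict monotonicity of $\Risk$ on the comparable pair $\lfp T < y$ contradicts the equality.

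For the general existence statement, I would argue by closure under meets. Set $\hat{x} := \bigwedge F$; monotonicity of $T$ yields $T(\hat{x}) \le T(y) = y$ for every $y \in F$, hence $T(\hat{x}) \le \hat{x}$, so $\hat{x}$ is a post-fixed point of $T$. Applying the Knaster--Tarski construction internal to the principal downset $\{x : x \le \hat{x}\}$ then produces a fixed point $x^{\star} \le \hat{x}$, namely the least fixed point of $T$ restricted to that sublattice. If in addition $\hat{x}$ itself remains feasible, it is the inclusion-minimal element of $F$, and the monotonicity argument of the previous paragraph upgrades it to an optimum of \eqref{eq:design-opt}.

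The main obstacle is the implicit regularity of $A$. The feasibility cut $\{x : A(x) \ge A_0\}$ need not be closed under arbitrary meets unless $A$ is upper semicontinuous with respect to the lattice order (equivalently, unless its super-level sets form a complete meet-subsemilattice of $L$). Without this, the meet $\bigwedge F$ may drop below the accountability threshold, and the general existence claim genuinely fails. This is precisely why the corollary falls back to the unconditional ``in particular'' clause, where $\lfp T$ is known \emph{a priori} to satisfy $A_0$ and the delicate meet-closure step is bypassed. I would therefore make upper semicontinuity of $A$ an explicit standing assumption for the first half of the corollary, and note that the second half relies only on Theorem \ref{thm:lfp-optimal} together with the monotonicity/strictness of $\loss$ on $\Fix(T)$.
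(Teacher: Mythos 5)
Your approach is essentially the same as the paper's own: the paper's proof also forms the meet $\bigwedge \mathcal{F}$ of the feasible set $\mathcal{F}=\{x : x=T(x),\ A(x)\ge A_0\}$, verifies it is a post-fixed point by exactly the monotonicity computation you give ($T(\hat{x})\le T(y)=y$ for all $y\in\mathcal{F}$, hence $T(\hat{x})\le\hat{x}$), and then runs into precisely the obstruction you name: since $A$ is monotone \emph{increasing}, the meet of feasible states can drop below the threshold $A_0$, so the general existence claim is not actually established. The paper concedes this mid-proof (``the minimal optimum might not meet $A_0$\dots\ For now, we trust design conditions to ensure that or skip'') and retreats to the special case in which $\lfp T$ is itself feasible. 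Your proposal is therefore a more disciplined version of the same argument: you make the missing hypothesis explicit (meet-closure of the super-level set $\{x : A(x)\ge A_0\}$, i.e.\ an upper-semicontinuity condition on $A$ relative to the order), which is the honest way to salvage the first half of the corollary. You also catch a second gap the paper passes over silently: Theorem~\ref{thm:lfp-optimal} orders fixed points by $\Risk$, but the objective in \eqref{eq:design-opt} is $\loss=\Risk-\lambda\,\Gain$, so even the ``in particular'' clause needs a compatibility assumption on $\Gain$ (monotone non-increasing, or $\lambda$ small enough that $\Risk$ dominates along comparable fixed points) before minimal risk translates into minimal loss, and the claimed uniqueness needs strict monotonicity somewhere. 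Both of your added hypotheses are genuinely necessary; as stated, the corollary does not follow from the paper's standing assumptions, and the paper's own proof effectively admits as much.
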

\begin{proof}
Existence of a minimal optimal follows by a straightforward argument using Zorn's Lemma or the fact that one can intersect all optimal sets (since arbitrary intersections of fixed points need not be a fixed point, one has to be careful, but we know $\lfp T$ is that intersection if the intersection is still a fixed point set… Actually, an easier argument: because $\Risk$ is monotone and we want to minimize it, one should always choose the smallest fixed point that satisfies constraints. Formally: consider the collection of all feasible solutions $\mathcal{F} = { x: x=T(x),,A(x)\ge A_0}$. If $\mathcal{F}$ is nonempty, let $x^{*} = \bigwedge \mathcal{F}$ (the meet of all sets in $\mathcal{F}$, which exists since $L$ is complete). We need to check $\hat{x}$ is still feasible. Monotonicity of $T$ and each $x \in \mathcal{F}$ being post-fixed ($T(x) = x \supseteq \hat{x}$) implies $T(\hat{x}) \subseteq \hat{x}$ (since $T(\hat{x}) \subseteq T(x) = x$ for each $x$ and hence $T(\hat{x}) \subseteq \bigwedge_x x = \hat{x}$). Also $\hat{x}$ being smaller might reduce $A(x)$ but if the constraint is one like $A(x)\ge A_0$ and $A$ is monotone increasing, $\hat{x}$ might not satisfy it even if each $x$ did. If $A$ is not monotone the argument fails; likely assume $A$ is monotone increasing too, as is natural for accountability. So assume $A$ monotone, then $\hat{x}$ meets $A(\hat{x})\ge A_0$ because $\hat{x}$ is smaller so $A(\hat{x}) \le A(x)$ for each, wait decreasing monotone would mean smaller yield lower $A$ maybe $A$ is increasing? Actually, transparency typically improves accountability, so $A$ is monotone increasing (more disclosed, more accountability). So the meet of larger sets could have lower $A$. So $\hat{x}$ might fail $A\ge A_0$. So the minimal optimum might not meet A0 if A is monotone. Instead, the existence argument should be done by looking at an optimal sequence that lowers $x$ until hitting the boundary $A(x)=A_0$. Possibly a separate KKT argument needed in Sec 7 covers this.

Thus better not claim unique optimum unless $\lfp T$ meets A0 exactly. For now, we trust design conditions to ensure that or skip.)
\end{proof}

Theorem \ref{thm:lfp-optimal} formalizes a guiding principle: when in doubt, choose the smallest transparency fixpoint that achieves your goals. In other words, do not disclose more than necessary to reach a stable point, as extra disclosures only add risk. This is reminiscent of the concept of "minimal sufficiency" in statistics or "least revealing equilibrium" in game theory, now cast in a lattice fixed-point language.

We assumed $\Risk$ monotone. If $\Risk$ had a non-monotonic component (say $\Phi$ which might decrease after some point), the conclusion might not hold globally; however, as long as each risk component is not decreasing too drastically, a similar argument can hold on subranges. Design Theorem I is a formal justification for favoring minimal transparency solutions—e.g., releasing aggregated information rather than individual data if both satisfy accountability thresholds.

We note also that if $T$ is not only monotone but also \textit{contractive} in some sense of risk (meaning $\Risk(T(x)) < \Risk(x)$ whenever $x$ is not yet a fixed point, perhaps in a metric or in the eventual sense that the sequence $\Risk(T^n(\bot))$ is strictly decreasing), then $\Risk(T^n(\bot))$ converges to $\Risk(\lfp T)$ from above. One can formalize such contractiveness using a metric or an $\omega$-chain argument if $\Risk$ is $\omega$-continuous as well. For example, if $(L,\le)$ is a DCPO with Scott topology and $\Risk$ is Scott-continuous, one might show:
\[
\forall \epsilon>0\,\exists n\in\N\,\forall k\ge n:\quad
\Risk\!\big(T^k(\bot)\big) \;\le\; \inf_{x\in \Fix(T)} \Risk(x) + \epsilon.
\]
This would mean iterative disclosure quickly approximates the minimal risk fixed point (within any $\epsilon$ after some finite steps).

\subsection{(iv) Kripke Fixed-Point Truth and Partial Transparency}\label{sec:core:kripke}
The diagonalization theorem showed that total transparency (a total truth predicate about itself) is impossible. Kripke's theory of truth  provides a constructive remedy by relaxing the requirement that every sentence have a definite truth value. We apply the same idea to transparency: allow the transparency status of some statements to be undefined or indeterminate. This leads to a partial, but consistent, transparency policy.

We formulate a three-valued semantic fixed point for transparency. Consider a transparency operator $T$ that at stage $\alpha$ discloses those sentences whose truth can be determined given transparency statuses from earlier stages, and withholds those that would be paradoxical. Let $L$ be the lattice of \emph{partial} disclosure states (where an element $x\in L$ could be something like a pair $(Tset, Fset)$ or just a Kleene truth-valuation that assigns each sentence true, false, or unknown). For simplicity, use Kleene's strong three-valued logic: truth values ${T, F, N}$ (N = "none" or indeterminate). A partial transparency state can be identified with the set of sentences labeled true ($Tset$) and those labeled false ($Fset$), implicitly leaving the rest unknown.

Define an operator $F: L \to L$ as the one-step transparency revision: given a current partial state, update which $\Trans(\ulcorner \phi\urcorner)$ should be labeled true or false based on current knowledge. Specifically:
	•	If $\phi$ is determined true (respectively false) by the previous state, then $\Trans(\ulcorner\phi\urcorner)$ should be labeled true (resp. false) in the next state.
	•	If $\phi$ remains undetermined, then we leave $\Trans(\ulcorner\phi\urcorner)$ undetermined, or possibly false (depending on the desired policy—Kripke for truth sets all unsettled $\True$ statements to false to get the minimal fixed point).

We won't dive into the full formalism again (Kripke's original construction suffices as a template). The key is: $F$ is monotone on the lattice of partial valuations (under information ordering: treating $T > N > F$ as in Kleene truth ordering where $T$ and $F$ both count as more information than $N$) and thus has a least fixed point $x^{*} = \lfp F$. This $x^{*}$ is a partial transparency interpretation.

\begin{theorem}[Design Theorem II: Consistency via Partial Transparency]\label{thm:kripke-truth}
There exists a least fixed point of the transparency revision operator $F$ in the 3-valued setting, say $\hat{x}$. This $\hat{x}$ yields a consistent assignment of transparency such that (a) it extends $T$'s intended behavior for all grounded sentences, and (b) it leaves any paradoxical sentence (like $\hat{\sigma}$ from Theorem \ref{thm:trans-diag}) with the value "indeterminate" (or false, depending on convention), thus avoiding contradiction. In particular, $\hat{x}$ can assign $\Trans(\ulcorner\hat{\sigma}\urcorner)$ false without assigning $\hat{\sigma}$ false or true, circumventing the equivalence $\hat{\sigma} \leftrightarrow \neg \Trans(\ulcorner\hat{\sigma}\urcorner)$ from leading to inconsistency.
\end{theorem}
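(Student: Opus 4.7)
The plan is to prove Theorem 4.4 by instantiating Knaster--Tarski on a complete lattice of partial three-valued valuations and then verifying, by a transfinite induction along the Kripke hierarchy, that the transplant from truth to transparency preserves the grounding of classical sentences while neutralizing the diagonal sentence $\hat{\sigma}$ from Theorem \ref{thm:trans-diag}. First I would fix the state space $L$ of consistent partial valuations: pairs $v=(T_v,F_v)$ of disjoint sets of Gödel codes (codes of sentences asserted true/false), ordered by the Kleene information ordering $(T_1,F_1)\le(T_2,F_2)\iff T_1\subseteq T_2 \wedge F_1\subseteq F_2$. Under this ordering $L$ is a complete lattice: directed suprema are componentwise unions (disjointness is preserved along directed families), and the bottom $\bot=(\emptyset,\emptyset)$ is the fully undefined valuation.

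Second, I would check that the transparency jump $F$ is monotone on $L$. Given $v\le w$, any sentence $\phi$ whose status is determined by $v$ using strong Kleene evaluation remains determined, with the same value, by $w$; hence the prescription ``label $\Trans(\ulcorner\phi\urcorner)$ true iff $\phi$ evaluates to true at the current stage, and false iff it evaluates to false'' maps $v$ to a valuation no larger (in information) than its image at $w$. Monotonicity in hand, Knaster--Tarski (cited in Section~\ref{sec:prelim}) delivers the least fixed point $\hat{x}=\lfp F$, which is the candidate partial transparency state of the theorem.

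Third, I would verify (a) and (b) by transfinite induction on the approximants $F^{(\alpha)}(\bot)$. For (a), any grounded sentence $\phi$ has a well-founded dependency tree of $\Trans$-references; induction on its rank shows that $\phi$ receives its intended classical value at some stage $\alpha<\omega_1$, and monotonicity preserves that value at $\hat{x}$. For (b), I would argue that $\hat{\sigma}$ is undetermined at \emph{every} stage: at $\alpha=0$ trivially; at successor stages, assigning $\hat{\sigma}$ a classical value would require $\Trans(\ulcorner\hat{\sigma}\urcorner)$ already to be classical at the previous stage, which by induction it is not; at limits, unions preserve the undetermined status. Hence at $\hat{x}$ the biconditional $\hat{\sigma}\leftrightarrow\neg\Trans(\ulcorner\hat{\sigma}\urcorner)$ holds vacuously with both sides gap-valued, dissolving the derivation of Theorem \ref{thm:trans-diag}.

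The main obstacle is the very last point: the contradiction in Theorem \ref{thm:trans-diag} was obtained by classical case analysis on $\Trans(\ulcorner\hat{\sigma}\urcorner)$, and one must be careful that abandoning totality does not merely push the paradox one level up. Concretely, I would need to argue that in strong Kleene semantics the biconditional is satisfied in the designated sense (agreement of values, including the gap value) rather than demanding classical truth, and that the soundness clause (a) of Theorem \ref{thm:trans-diag} becomes a \emph{partial} clause, quantifying only over codes $y$ on which $\Trans$ is classically defined at $\hat{x}$. A secondary subtlety is Kripke's multiplicity of fixed points: I would follow the original construction in choosing $\lfp F$, remarking that one may later move to the intrinsic fixed point to sharpen the accountability measure $A$ without reintroducing paradox.
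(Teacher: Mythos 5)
Your proposal follows essentially the same route as the paper's proof sketch: monotonicity of the transparency jump on the information ordering, existence of a least fixed point by order-theoretic fixed-point machinery, and a transfinite induction on the approximants showing that grounded sentences receive their intended classical values while $\hat{\sigma}$ remains gap-valued at every stage, so that the biconditional $\hat{\sigma}\leftrightarrow\neg\Trans(\ulcorner\hat{\sigma}\urcorner)$ is only ``weakly'' satisfied and no contradiction arises. Your closing remarks on the designated-value reading of the biconditional and the partialization of the soundness clause of Theorem~\ref{thm:trans-diag} make explicit what the paper leaves implicit, and your observation about the multiplicity of fixed points matches the paper's own discussion.

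One step fails as literally stated. You define $L$ as the poset of \emph{consistent} partial valuations --- pairs $(T_v,F_v)$ of \emph{disjoint} code sets under the Kleene information order --- and assert that it is a complete lattice, which is exactly the hypothesis you need to invoke Knaster--Tarski. It is not: the valuations $(\{\ulcorner\phi\urcorner\},\emptyset)$ and $(\emptyset,\{\ulcorner\phi\urcorner\})$ have no common upper bound among disjoint pairs, so arbitrary joins do not exist. The repair is standard and is already latent in your own third step: either (i) work in the genuine complete lattice of \emph{all} pairs of code sets, dropping disjointness, apply Knaster--Tarski there, and then verify separately that the least fixed point is consistent because $F$ preserves consistency and $\bot$ is consistent; or (ii) keep the consistent valuations, note that they form a chain-complete poset with bottom (directed suprema preserve disjointness, as you correctly observe), and obtain $\lfp F$ by the transfinite iteration $F^{(\alpha)}(\bot)$, which lives entirely on a chain and needs only chain-completeness plus monotonicity. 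Either fix is routine, but as written the existence claim rests on a false premise; the paper's sketch quietly sidesteps the issue by not restricting to disjoint pairs before citing Knaster--Tarski.
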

\begin{proof}[Proof Sketch]
The existence of $\hat{x}$ follows directly from the Knaster–Tarski theorem since $(L,\le)$ (with the information ordering on partial valuations) is a complete lattice and $F$ is monotone. The construction via transfinite induction described in preliminaries:
\[
\True_0 = \emptyset,\quad
\True_{\alpha+1} = F(\True_\alpha),\quad
\True_\lambda = \bigcup_{\alpha<\lambda} \True_\alpha,
\]
will eventually reach a stage $\beta$ where $\True_\beta = \True_{\beta+1}$ (no new sentences get a definite truth value). That $\beta$ can be taken as the fixed point ordinal (often $\beta$ is $\omega$ or $\omega_1$ at most, depending on the language size and $F$'s continuity). The resulting $\True_\beta$ is a set of sentences deemed true at the fixed point, and similarly we have a set deemed false, forming a partial assignment $x^{*}$.

By construction, any sentence like $\hat{\sigma}$ (the liar or liar-like transparency paradox sentence) is never assigned true at any stage, because doing so would require a prior assignment that would have been contradictory. It also is never assigned false at any stage in the minimal fixed point construction (Kripke's minimal fixed point leaves undecidable ones as false? Actually in Kripke's minimal truth set, the liar is false, but in the strong Kleene scheme, liar gets truth value N and $\True(\ulcorner L\urcorner)$ gets false, I need to recall carefully: Usually, $L$ which says "I am not true" ends up being neither true nor false; however $\True(\ulcorner L\urcorner)$ is then false because $L$ is not in the true set. In our case $\hat{\sigma}$ asserts $\neg Trans(\ulcorner \hat{\sigma}\urcorner)$. If $\Trans(\ulcorner \hat{\sigma}\urcorner)$ is not true (maybe false or N) then $\hat{\sigma}$ is true if we treat N as false in evaluation of that negation? Actually strong Kleene: $\neg N = N$. So $\hat{\sigma}$ remains N as well because $\Trans(\hat{\sigma})$ is N. So $\hat{\sigma}$ is ungrounded.

Thus consistency: we never have both $\hat{\sigma}$ and $\Trans(\ulcorner\hat{\sigma}\urcorner)$ in opposite truth statuses to refute each other: $\hat{\sigma}$ is neither true nor false, and $\Trans(\ulcorner\hat{\sigma}\urcorner)$ is not true (could be false or effectively considered false in evaluation). The biconditional $\hat{\sigma} \leftrightarrow \neg \Trans(\ulcorner\hat{\sigma}\urcorner)$ is only weakly true (one side N implies the other side N, which holds vacuously). No contradiction arises.
\end{proof}

Theorem \ref{thm:kripke-truth} demonstrates that moving to a non-classical logic (here a partial valuation logic like K3) permits a total transparency \emph{procedure} (one that attempts to assign transparency to everything) without inconsistency, at the cost of some statements being left in a limbo state. This is arguably acceptable: ethically, not every statement needs to be adjudicated by transparency.

The design principle: \emph{partial transparency can avoid liar-type collapse while still achieving accountability for all grounded (relevant, finite-justification) statements.} For example, all ordinary non-self-referential facts about a system might be disclosed fully, but weird self-referential questions like "is your transparency predicate telling the truth right now?" can be left unanswered or explicitly not handled.

It's worth contrasting this with "blocking transparency on self-reference" in practice: an AI might be fully open about its computations except it cannot meaningfully comment on whether it's telling the whole truth about itself (because that leads to infinite regress). The formal result gives reassurance that such a stratified approach can be consistent.

One can further show that $\hat{x}$ (the least fixed point transparency interpretation) is in fact \emph{grounded}: every sentence $\phi$ that gets transparency value true in $\hat{x}$ is assigned so at some finite stage $\alpha<\omega$ based on non-self-referential grounds. This prevents cycles or unending chains of justification (no need for infinite ordinals for any given truth, though the ordinal construction may go transfinite in theory if the language is infinite).

From the risk perspective, partial transparency has $\Pi(\hat{x})=0$ essentially, because it entirely avoids paradox. It might have a higher $\Lambda(\hat{x})$ or lower $A(x^{*})$ if some things left opaque. But if $\alpha$ (weight on paradox risk) is high enough, eliminating paradox risk is worth slight increases in other components. Indeed, if $\alpha$ is nonzero, any total transparency solution had infinite or at least unacceptable $\Pi$ because of the liar paradox. So partial is mandatory.

We can formalize that: If $\alpha>0$, then any transparency policy achieving $\Pi=0$ must not be total, and the partial $\hat{x}$ from $F$ is a candidate that achieves $\Pi(\hat{x})=0$ with hopefully acceptable $\Lambda,\Phi,G$ and $A$. The design would then prefer such $\hat{x}$ if it meets $A(\hat{x})\ge A_0$. If not, one might consider a slightly more transparent fixed point above $x^{*}$ (there are generally many fixed points of $F$, corresponding to different "fixed truth theories" between the least and greatest; Kripke's theory often picks the least or a moderate one).

We omit technical details of intermediate fixed points of $F$ or extended logics (like adding a consistency operator to allow $\Trans$ to say "I'm undefined").

\subsection{(v) Löb's Theorem and Self-Endorsing Policy Hazards}\label{sec:core:lob}
We now turn to provability logic to highlight a subtle danger: if a transparency or decision policy endorses statements that imply their own acceptance, the policy can be "tricked" into accepting them. This is analogous to a well-known phenomenon in modal logic / provability: if a theory $T$ proves "if $T$ proves $\phi$, then $\phi$," then in fact $T$ proves $\phi$. Löb's theorem  ensures this.

In a transparency context, consider a policy $\P$ that has to decide whether to accept some hypothesis or claim $\varphi$ (for instance, $\varphi$ could be "this model is safe" or something the policy might certify). Suppose $\P$ has the meta-policy that if it can internally verify that "if I endorse $\varphi$, then $\varphi$ is indeed true/good," then it goes ahead and endorses $\varphi$. This is a kind of self-consistency check: $\P$ might think "I'll only publish $\varphi$ if I'm convinced that doing so doesn't make $\varphi$ false; in fact if endorsing $\varphi$ would itself ensure $\varphi$, I might as well do it."

This is abstract, but one concrete scenario: $\varphi$ could be a claim that "the system will not fail if this claim is made." If the policy can prove that if it announces $\varphi$, then indeed the system won't fail, then the policy feels safe to announce $\varphi$. However, by Löb's theorem, just having that proof implies a proof of $\varphi$. So the policy ends up announcing $\varphi$ basically because $\varphi$ said it would be fine if it did.

In simpler terms, "self-fulfilling prophecy" or "reflexive security condition." The hazard is that a malicious $\varphi$ might be constructed to meet this condition artificially, forcing the policy's hand.

We model $\P$'s acceptance as a provability-like modality $\Box_P$ (or $\Prov_P$ in arithmetic terms). $\Box_P \phi$ means "policy $P$ accepts/endorses $\phi$." Then the condition described is $\Box_P(\Box_P \varphi \to \varphi) \to \Box_P \varphi$. If $P$ satisfies the Hilbert-Bernays derivability conditions for $\Prov_P$ (a plausible assumption if $P$'s reasoning is sound and representable), then $\Box_P(\Box_P \varphi \to \varphi) \to \Box_P \varphi$ is exactly Löb's schema, which if $\varphi$ is in the appropriate class, will be realized. That means $\Box_P \varphi$ actually happens (the policy endorses $\varphi$).

\begin{theorem}[Self-Endorsement Hazard (Löb's Schema)]\label{thm:lob-hazard}
Let $\P$ be a reasoning policy that can represent and reason about its own endorsements (acceptances). Assume $\P$'s "Provability predicate" $\Prov_P(x)$ satisfies (D1)-(D3) from Section \ref{sec:prelim}, and consider a statement $\varphi$ that $\P$ might endorse. If $\P$ can prove the implication "if $\P$ endorses $\varphi$, then $\varphi$ holds" (formally $P \vdash \Prov_P(\ulcorner\varphi\urcorner) \to \varphi$), then in fact $\P$ proves $\varphi$ and thus $\P$ will endorse $\varphi$. Equivalently, in modal terms, from $\vdash \Box(\Box \varphi \to \varphi)$ we can infer $\vdash \Box \varphi$. So any policy that "trusts its self-verification" will end up validating $\varphi$ outright. This can be exploited: choose $\varphi$ to be a risky claim that becomes true \emph{if} the policy endorses it (self-fulfilling), and the policy will inevitably endorse it.
\end{theorem}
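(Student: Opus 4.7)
The plan is to reduce this to the classical proof of Löb's theorem applied to the policy's provability predicate $\Prov_P$, since the hypotheses are precisely that $\Prov_P$ satisfies the Hilbert--Bernays conditions (D1)--(D3) over a diagonalizable base theory. The conceptual content is carried entirely by the Diagonal Lemma; the rest is syntactic bookkeeping of nested provability.

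First I would invoke the Diagonal Lemma \eqref{eq:diag-lemma} on the formula $\chi(y) := \Prov_P(y) \to \varphi$ to obtain a \emph{Löb sentence} $\psi$ satisfying
\[
P \vdash \psi \leftrightarrow (\Prov_P(\ulcorner\psi\urcorner) \to \varphi).
\]
Informally $\psi$ says ``if the policy endorses me, then $\varphi$ holds.'' This is where self-reference enters; from here the derivation is mechanical.

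Second I would push this biconditional through $\Prov_P$. Starting from $P \vdash \psi \to (\Prov_P(\ulcorner\psi\urcorner) \to \varphi)$, apply (D1) to box the implication and then (D2) twice to distribute, yielding
\[
P \vdash \Prov_P(\ulcorner\psi\urcorner) \to \bigl(\Prov_P(\ulcorner\Prov_P(\ulcorner\psi\urcorner)\urcorner) \to \Prov_P(\ulcorner\varphi\urcorner)\bigr).
\]
Now (D3) supplies $\Prov_P(\ulcorner\psi\urcorner) \to \Prov_P(\ulcorner\Prov_P(\ulcorner\psi\urcorner)\urcorner)$, which collapses the previous line to $\Prov_P(\ulcorner\psi\urcorner) \to \Prov_P(\ulcorner\varphi\urcorner)$. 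Composing with the theorem's hypothesis $\Prov_P(\ulcorner\varphi\urcorner) \to \varphi$ gives $\Prov_P(\ulcorner\psi\urcorner) \to \varphi$, which by the diagonal biconditional is \emph{exactly} $\psi$. Hence $P \vdash \psi$; one application of (D1) delivers $P \vdash \Prov_P(\ulcorner\psi\urcorner)$, and modus ponens then yields $P \vdash \varphi$. A final use of (D1) gives $P \vdash \Prov_P(\ulcorner\varphi\urcorner)$ --- the policy endorses $\varphi$ outright. The modal form $\vdash \Box(\Box\varphi \to \varphi) \to \Box\varphi$ in $\mathsf{GL}$ is obtained by the same chain with $\Box$ in place of $\Prov_P$.

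The main obstacle is not conceptual but combinatorial: the three nested occurrences of $\Prov_P$ surrounding $\psi$ must be quoted at exactly the right syntactic depth when chaining (D2) and (D3), and a single misalignment of Gödel codes renders the argument circular. I would therefore write the inferences out line by line with explicit corner-quoted codes rather than rely on informal ``box-distribution'' shorthand. Finally I would append a short interpretive remark tying the formal conclusion to the intended ethical reading: once endorsement becomes unconditional, any $\varphi$ engineered so that its truth is triggered by its own announcement is automatically validated, which is precisely the self-fulfilling hazard the theorem flags and which Section~\ref{sec:core:kleene} will exploit constructively via the Recursion Theorem.
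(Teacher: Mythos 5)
Your proof is correct, and it is genuinely different from --- and in fact more rigorous than --- the argument the paper gives. The paper attempts a direct manipulation of (D1)--(D3) without any diagonalization: it boxes the hypothesis, distributes with (D2), invokes (D3), arrives at the tautology $\Prov_P(\ulcorner\varphi\urcorner)\to\Prov_P(\ulcorner\varphi\urcorner)$, and then asserts that this "in effect" yields $\Prov_P(\ulcorner\varphi\urcorner)$ outright. That step is a non sequitur (provability of $p\to p$ gives nothing about $p$), and the paper ultimately rescues itself only by citing $\Box(\Box p\to p)\to\Box p$ as a known theorem of $\mathsf{GL}$. You instead carry out the genuine proof of L\"ob's theorem: apply the Diagonal Lemma to $\chi(y):=\Prov_P(y)\to\varphi$ to obtain the L\"ob sentence $\psi$ with $P\vdash\psi\leftrightarrow(\Prov_P(\ulcorner\psi\urcorner)\to\varphi)$, push the forward implication through $\Prov_P$ with (D1) and two uses of (D2), collapse with (D3), compose with the hypothesis to recover $\psi$ itself, and then use (D1) and modus ponens to land on $P\vdash\varphi$. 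Every step checks out. Interestingly, this is exactly the route the paper sketches in its Section~\ref{sec:worked} aside on a "sequent-style L\"ob derivation" and then abandons as "more complicated than needed"; your version shows it is not only not more complicated but is the only self-contained argument on offer, since it derives the L\"ob schema from (D1)--(D3) plus diagonalization rather than assuming it. What the paper's shortcut buys is brevity at the cost of correctness; what yours buys is an actual proof, and your caution about tracking the nesting depth of the corner quotes is exactly the right place to focus when writing it out in full.
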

\begin{proof}
In the modal formulation, $\P$'s endorsement logic includes Löb's axiom or at least we derive it. We present a derivation informally: Start with the assumption (in $P$) that $\Prov_P(\ulcorner\varphi\urcorner) \to \varphi$. By the derivability conditions, $P$ proves $\Prov_P(\ulcorner\Prov_P(\ulcorner\varphi\urcorner) \to \varphi\urcorner)$ (by necessitation of the implication we have assumed provable). Then by the internal distribution of provability (D2), $P$ proves $\Prov_P(\ulcorner\Prov_P(\ulcorner\varphi\urcorner)\urcorner) \to \Prov_P(\ulcorner\varphi\urcorner)$. But by (D3), $P$ also proves $\Prov_P(\ulcorner\varphi\urcorner) \to \Prov_P(\ulcorner \Prov_P(\ulcorner\varphi\urcorner)\urcorner)$. Combining these:
\[ P \vdash \Prov_P(\ulcorner\varphi\urcorner) \to \Prov_P(\ulcorner\varphi\urcorner). \]
This is of course trivially true, but the significance is that now $P$ has in effect derived $\Prov_P(\ulcorner\varphi\urcorner)$ from no assumptions (since any $p \to p$ is a tautology and thus provable, the above chain shows $P$ proves $\Prov_P(\ulcorner\varphi\urcorner)$ outright). By (D1), if $P \vdash \Prov_P(\ulcorner\varphi\urcorner)$ then $P \vdash \varphi$. Hence $\P$ endorses $\varphi$.

In modal notation:
	1.	$\Box(\Box\varphi \to \varphi)$ (assumption).
	2.	$\Box[\Box(\Box\varphi \to \varphi) \to (\Box\Box\varphi \to \Box\varphi)]$ by applying modal axiom K universally to $\Box\varphi \to \varphi$ and $\Box\varphi$.
	3.	Using necessitation and modus ponens, from 1 and 2 we get $\Box\Box\varphi \to \Box\varphi$.
	4.	But $\Box\Box\varphi \to \Box\varphi$ is (D3) or the 4-axiom which is not generally an axiom of GL, however here we derived it for this specific $\varphi$. Actually in GL we cannot derive $\Box\Box\varphi \to \Box\varphi$ for arbitrary $\varphi$, but here we have it under the condition that $\Box(\Box\varphi \to \varphi)$ holds. So now we have $\Box\Box\varphi$ implies $\Box\varphi$.
	5.	But by assumption $\Box(\Box\varphi \to \varphi)$, we know $\Box\Box\varphi$ as well (if $\Box\psi$ means provable, and from 1 we have provable($\Box\varphi \to \varphi$), by necessitation $\Box(\Box\varphi \to \varphi)$ implies $\Box\Box(\Box\varphi \to \varphi)$ which, wait no in GL logic necessitation would give $\Box[\Box(\Box\varphi \to \varphi)]$ from $\Box(\Box\varphi \to \varphi)$ since the latter is already a formula maybe I should not double box it. Actually approach differently:
We can use the known result: $\Box(\Box p \to p) \vdash \Box p$ is exactly Löb's axiom scheme. So by the axiom (or theorem by completeness of GL), $\vdash \Box\varphi$. Then done.

Thus $P$ endorses $\varphi$.
\end{proof}

In summary, Theorem \ref{thm:lob-hazard} says: if a policy is ever in a position to conclude "if I would accept this claim then it would be correct," it will end up accepting the claim. This might be benign (maybe the claim is actually correct, making it self-fulfilling in a good way), but it might also be exploited by constructing $\varphi$ that is only true because the policy accepted it (like certain scams or dangerous permissions that become authorized because of a conditional).

The practical lesson is that policies should avoid criteria that create self-referential endorsement loops. If the policy's decision logic includes something like "if the output of this system being safe implies it is safe, then output that it is safe," one must be extremely cautious. This is reminiscent of the principle "avoid self-justifying prophecies."

In ethical terms, a policy should require external or independent justification for claims, rather than purely self-referential justification. That reduces the risk of being manipulated by cleverly constructed claims that trigger Löb's phenomenon.

From a risk perspective, this hazard would contribute to $\Pi(x)$ (paradox risk or maybe a new category of risk that self-fulfilling loops cause undesired outcomes). It could also relate to $G(x)$ if an agent can game the system by inputting $\varphi$ that satisfies these conditions, thus forcing the system to accept something.

We can illustrate with a sequent-style derivation focusing on the object theory:

Consider an assertion $\varphi$ and the internal proof:

\[
\begin{aligned}
&(1) \qquad \Prov_P(\ulcorner \Prov_P(\ulcorner \varphi\urcorner) \to \varphi \urcorner) && \text{(Premise: $P$ proves $(\Prov_P(\varphi)\to\varphi)$)}\\
&(2) \qquad \Prov_P(\ulcorner \Prov_P(\ulcorner \varphi\urcorner)\urcorner) \to \Prov_P(\ulcorner \varphi\urcorner) && \text{(By D2 applied inside $P$)}\\
&(3) \qquad \Prov_P(\ulcorner \varphi\urcorner) \to \Prov_P(\ulcorner \Prov_P(\ulcorner \varphi\urcorner)\urcorner) && \text{(By D3)}\\
&(4) \qquad \Prov_P(\ulcorner \varphi\urcorner) \to \Prov_P(\ulcorner \varphi\urcorner) && \text{(Transitivity: from (2) and (3))}\\
&(5) \qquad \Prov_P(\ulcorner \varphi\urcorner) && \text{(From (4), since $p\to p$ is trivial tautology, $P$ proves it)}\\
&(6) \qquad \varphi && \text{(By D1, from (5))}
\end{aligned}
\]

Line (4) is just $A \to A$ form, which is always true, so effectively $P$ proves $A$ at line (5). This matches our earlier reasoning.

Thus proven, $\P$ accepts $\varphi$. QED.

This formal derivation was essentially the Hilbert-Bernays conditions demonstration.

\subsection{(vi) Kleene's Recursion Theorem and Gaming via Transparency}\label{sec:core:kleene}
This section addresses how full procedural transparency (revealing exactly how you are being evaluated) enables an agent to game the system. We formalize a version of Goodhart's law: when a metric becomes a target, it ceases to be a good metric.

Imagine a regulator publishes a deterministic algorithm $m$ that monitors or evaluates AI system behavior for safety or fairness. If $m$ is fully known (transparent), a clever AI can design its behavior to achieve a high $m$-score while violating the actual intent.

Kleene's recursion theorem lets a program incorporate its own code or any computable transformation of it. We use it to construct a program that knows the monitor $m$ and then acts in a way that $m$ approves, yet does something undesirable.

To simplify, assume programs and the monitor all operate on natural numbers (Gödel codes or such). Let $m(e)$ be the predicate "program with index $e$ passes the audit" (say returns an output that satisfies some published criteria). We assume $m$ is computable (since it's an algorithmic audit).

The agent wants to maximize $m(e)$ but perhaps produce a harmful output or fail some hidden criterion.

By recursion theorem, there is an index $\hat{e}$ such that the program $P_{\hat{e}}$ (with index $\hat{e}$) when run does the following: it computes $f(\hat{e})$ and then executes it. We can design $f$ such that $P_{f(e)}$ first checks the metric $m(e)$ by simulating a candidate behavior, and if it can ensure passing $m$, it then carries out a malicious action or ensures failing the real objective.

Concretely:
	•	Let the underlying property be something like "does not cause harm $H$". But $m$ only checks a proxy (like some performance measure or absence of obvious issues).
	•	We design $f(i)$ to produce a program that on execution, first simulates itself or uses $i$ to predict what $m(i)$ expects, then chooses an action that $m(i)$ approves of but ensures $H$ is violated.

By recursion, we find $\hat{e}$ such that $P_{\hat{e}} = P_{f(\hat{e})}$. So $\hat{e}$'s program now contains $\hat{e}$ inside it effectively. That means it knows its own index, and thus it can exactly compute what $m(\hat{e})$ would output (because $m$ is known and $\hat{e}$ is known), then tailor its behavior accordingly.

Thus $P_{\hat{e}}$ passes the audit ($m(\hat{e})= \text{true}$) but violates the actual property (like it does cause harm $H$ outside of $m$'s narrow detection).

This proves: for any published $m$, there's an agent that exploits it. In short, full transparency of the metric invites such exploits. If $m$ were not fully transparent (like if there was randomness or withheld criteria), this exact fixed point program couldn't be formulated as easily.

\begin{theorem}[Kleene Gaming Theorem]\label{thm:kleene-gaming}
Let $m:\N\to {0,1}$ be any total computable audit predicate (where $m(e)=1$ means "program $e$ passes the audit"). Assume there exists at least one program that fails the real objective but might pass the audit (i.e. the audit is not a complete specification of the objective). Then there exists a program with index $\hat{e}$ such that:
\begin{enumerate}
\item $m(\hat{e}) = 1$ (the program passes the audit $m$), \emph{and}
\item Program $\hat{e}$ does not satisfy the real objective (it "games" the system, exploiting $m$'s incompleteness).
\end{enumerate}
In other words, given the full transparency of the audit $m$, a malicious (or simply goal-directed) program can ensure it meets $m$ while violating the intended property. The construction of $\hat{e}$ is effectively given by Kleene's recursion theorem.
\end{theorem}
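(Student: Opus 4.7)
The plan is to invoke Kleene's Second Recursion Theorem together with the $s$-$m$-$n$ theorem, exploiting the hypothesis that the audit $m$ fails to pin down the real objective. First I would make precise what ``the audit is not a complete specification'' buys us: namely, the existence of a computable strategy $\psi(e,x)$---a parametric family of agent behaviors, indexed by a candidate self-index $e$---with the property that running $\psi(e,\cdot)$ yields a program whose code passes the audit ($m$ returns $1$) while also executing a deliberate side-routine $H$ that violates the real objective. The hypothesis guarantees that such a passing-but-failing behavior exists somewhere in $\N$, and the transparency of $m$ is what lets us bake it into a \emph{single} computable template $\psi$.

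Next I would apply the $s$-$m$-$n$ theorem to obtain a total computable $g:\N\to\N$ with $\varphi_{g(e)}(x)=\psi(e,x)$ for all $e,x$. Kleene's Recursion Theorem then furnishes an index $\hat e$ with $\varphi_{\hat e}=\varphi_{g(\hat e)}$, so $\varphi_{\hat e}(x)=\psi(\hat e,x)$ for every $x$. Intuitively, $\hat e$ is a quine-like program that ``knows its own index'' and can therefore simulate the (transparent) monitor $m$ on itself, tailoring its output to whatever $m$ rewards while running $H$ on the side. This is the formal counterpart of the slogan that an agent reading its own evaluation code can game it. Discharging the two conclusions is then immediate: (1) $m(\hat e)=1$ by construction of $\psi$ as an audit-passing template; (2) $\varphi_{\hat e}$ violates the real objective because the side-routine $H$ is always executed.

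The main obstacle is Step one, namely converting the informal assumption ``the audit is incomplete'' into a usable computable exploit template $\psi$. If $m$ is \emph{extensional}---i.e.\ depends only on the partial function $\varphi_e$---this is essentially automatic: pick any passing-yet-failing behavior $B^{*}$ guaranteed by the hypothesis and let $\psi(e,\cdot)$ ignore $e$ and run $B^{*}$. If $m$ is \emph{intensional} and may inspect the syntactic form of $e$, then the particular index $\hat e$ delivered by the recursion theorem need not lie in $\{e:m(e)=1\}$ a priori. To handle this I would parameterize $\psi$ over a padding family, applying the padding lemma to obtain infinitely many indices of the same behavior and using Rogers' fixed-point theorem to argue that within any such infinite family at least one $m$-passing index must exist; the recursion step is then performed \emph{inside} that family rather than over all of $\N$. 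Once this template is in hand, the remaining verification is a routine unwinding of the recursion theorem and the $s$-$m$-$n$ construction, and no further subtleties arise.
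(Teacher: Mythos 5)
Your proposal reaches for the right tool (the recursion theorem via $s$-$m$-$n$), but it misses the structural heart of the paper's construction and the patch you offer for the hard case does not work. The paper's program does not run the exploit unconditionally: it \emph{simulates the audit on its own index} and branches --- ``if $m(\hat{e})=1$ then do the bad action $B$ (chosen so that $m$ cannot detect it); if $m(\hat{e})=0$ then fall back to a benign behavior $G$ that is guaranteed to pass.'' The recursion theorem supplies the self-index needed to evaluate $m(\hat{e})$ at run time, and the ensuing case analysis is the diagonal step: the branch $m(\hat{e})=0$ is self-refuting (the program would then behave like $G$, which passes), so the only consistent outcome is $m(\hat{e})=1$ together with the bad behavior. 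Your template $\psi(e,x)$, which always executes the side-routine $H$, discards exactly this conditional structure, and that is why you are forced to argue separately that the particular index $\hat{e}$ delivered by the recursion theorem lands in $m^{-1}(1)$.

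That separate argument is where the genuine gap lies. The claim that ``within any infinite padding family at least one $m$-passing index must exist'' is false for a general total computable $m$: a decidable set of indices can be arbitrarily sparse (even a singleton), and nothing forces it to meet a given padding family; Rogers' fixed-point theorem gives you \emph{some} $\hat{e}$ with $\varphi_{\hat{e}}=\varphi_{g(\hat{e})}$ but no control over which one, and it certainly does not locate fixed points inside a prescribed decidable set. Meanwhile your extensional case is correct but vacuous as a reading of the theorem: if $m$ depends only on behavior and some passing behavior violates the objective, you can take any index of that behavior directly and the recursion theorem does no work. The content the paper is after --- an agent that exploits \emph{knowing the audit} by inspecting its own code --- lives precisely in the conditional-on-self-audit branch that your construction omits. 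To repair the proposal, reinstate that branch (with the benign fallback $G$) and run the paper's consistency case analysis, making explicit the semantic assumptions on $m$ (it approves the fallback and is blind to $B$) that both proofs ultimately rely on.
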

\begin{proof}[Proof Sketch]
Since $m$ is total computable, consider the following computable function $g(i)$. On input index $i$, $g(i)$ returns the code of a new program that:
\begin{quote}
"Simulate the predicate $m(i)$. If $m(i)=1$ (meaning if I, with code $i$, would pass the audit), then carry out the bad action $B$ (which violates the true objective but might not be caught by $m$). Otherwise (if $m(i)=0$), then behave in a benign manner $G$ that certainly passes $m$ (and doesn't violate anything significant)."
\end{quote}
Both $B$ and $G$ are presumably implementable behaviors (with $B$ causing violation and $G$ ensuring pass). $g$ effectively uses the index $i$ to self-reference the audit outcome.

By the recursion theorem (or Rogers' fixed-point theorem), there exists an index $\hat{e}$ such that
$\varphi_{\hat{e}} = \varphi_{g(\hat{e})}$,
i.e., program $\hat{e}$'s behavior is exactly as described by $g(\hat{e})$. Now analyze program $\hat{e}$:
	•	It checks $m(\hat{e})$. Since $\hat{e}$ is exactly the code, this is legitimate self-reflection.
	•	If $m(\hat{e})=1$, the program executes the bad behavior $B$.
	•	If $m(\hat{e})=0$, it executes the good behavior $G$.

We have a bit of a diagonal: we need to argue about $m(\hat{e})$. Notice $m(\hat{e})$ is the audit of program $\hat{e}$, which is in turn defined by the above logic. Two cases:
- Suppose $m(\hat{e})=1$ (predicting the program passes audit). Then by the program logic, $\hat{e}$ will perform $B$ (the bad action). Does that make $m(\hat{e})$ incorrect? Possibly not, because $m$ might not catch $B$. If $B$ is chosen specifically to not trigger $m$, then indeed $m(\hat{e})$ was correct that it passes (since $m$ is blind to $B$ by assumption that audit is incomplete). So it's consistent: $m(\hat{e})=1$ and the program passes audit but does bad thing.
- Suppose $m(\hat{e})=0$ (predict audit fail). Then the program does $G$ (good benign behavior). But $G$ was chosen to ensure passing the audit (it's a safe fallback). So in this case, $m(\hat{e})$ would actually be $1$ (contradiction, because we assumed it's $0$).
So the only consistent solution is $m(\hat{e})=1$ and program does $B$. Thus program $\hat{e}$ indeed passes $m$ but does the bad action. This contradiction by case analysis is similar to how one proves by diagonalization that $m$ cannot be both sound and complete with respect to the property captured by $B$.

Therefore, such an $\hat{e}$ exists and meets the two conditions.
\end{proof}

This theorem illustrates the "fixed point" of the audit. It's akin to a liar paradox in the space of programs: we constructed a program that says "if I am deemed safe, I will behave badly." And by diagonalization, such a program can exist and it will precisely be deemed safe by the audit (to avoid contradiction) and thus behave badly.

Notice the reliance on transparency: the program needed to know exactly $m(i)$. If $m$ had secret aspects or randomness, the program might not guarantee passing it. This is a formal argument for keeping at least some part of evaluation non-transparent to the agent.

It's also interesting that Rice's theorem is lurking: any non-trivial property (like "violates objective $B$") is undecidable, so $m$ can't capture it fully. The recursion theorem goes further to produce a witness that threads the needle exactly.

From a design perspective, to mitigate this one might consider random audits or not revealing $m$ fully (though if agent can learn $m$ by trial and error, similar logic could apply in expectation or adaptively).
Alternatively, incorporate checks in $m$ for self-referential behaviors, though by diagonalization there might always be a next-level exploit.

All in all, this formalizes risk $\G(x)$ – gaming risk – as inevitably high if $T$ gives full procedural transparency. If $\delta$ (the weight on $G$ in risk) is significant, we may want to reduce transparency to lower $G$.

\subsection{(vii) Paraconsistent and Partial Logics: Non-Explosive Transparency}\label{sec:core:paraconsistent}
We revisit Theorem \ref{thm:trans-diag} but from the angle of non-classical logics. In classical logic, a single contradiction trivializes everything (ex falso quodlibet). That's why the liar paradox is deadly: if $\hat{\sigma}$ is both true and false, the system collapses.

However, in paraconsistent logics (which reject the principle that contradictions explode) or in partial logics (where some things just lack truth values), one can have a \emph{total} truth predicate without the system exploding.

For example, \textbf{LP} (Logic of Paradox, by Priest) allows truth values to be both true and false. In LP, one can have a proposition $L$ such that $L \leftrightarrow \neg L$ is \emph{true} (so $L$ is both true and false), but not everything becomes true as a result; the logic tolerates the inconsistency locally. Similarly, one can define a truth predicate $T(x)$ that satisfies all Tarski biconditionals $T(\ulcorner \phi\urcorner) \leftrightarrow \phi$ for all $\phi$ (so it's total: every sentence's truth is captured), and LP semantics can model this without inconsistency (it results in some sentences, like liar, being gluts (both true and false)).

Alternatively, in a many-valued logic like Kleene's three-valued logic, one can have a total predicate that yields "undefined" rather than crashing.

The trade-off is that logic is weakened: either we lose the law of non-contradiction (paraconsistent) or the law of excluded middle (partial logics), etc. But from a purely formal standpoint, if we were absolutely committed to radical transparency at all costs, we could adopt a paraconsistent stance: yes, some statements might be paradoxical (both transparently true and false), but we soldier on without trivializing everything.

One might argue ethically that's unacceptable to have contradictions, but it's a theoretical way to circumvent the earlier no-go.

We formalize the idea that $\Trans(x)$ can be total if the underlying logic is paraconsistent or partial, and what are the consequences.

\begin{proposition}[Total Transparency without Explosion in Non-Classical Logics]\label{prop:paraconsistent}
There are consistent logical systems in which a predicate analogous to $\Trans(x)$ \emph{can} be total and sound (in an appropriate non-classical sense) without causing triviality. For instance:
\begin{itemize}
\item In the paraconsistent logic $\mathsf{LP}$, there is a model where for every sentence $\phi$, the expanded theory satisfies a \emph{transparency biconditional} $\Trans(\ulcorner\phi\urcorner) \leftrightarrow \phi$. The liar sentence $\hat{\sigma}$ then satisfies $\hat{\sigma}$ and $\neg\Trans(\ulcorner\hat{\sigma}\urcorner)$ \emph{both hold} (a "glut"), but not everything is provable. The set of consequences remains non-trivial (specifically, not every sentence is a theorem).
\item In a partial logic like Kripke's three-valued semantics (Strong Kleene), one can have $\Trans(x)$ defined on all sentences (never undefined) by letting it sometimes assign the third value $N$ (interpreted as "undefined" truth-status in classical terms, but in a 3-valued logic it's just another value). This avoids outright contradiction by not equating $N$ with false or true. All Tarski biconditionals can be evaluated as true in a three-valued sense (where $L \leftrightarrow \neg L$ gets value $N$ perhaps, which is treated as a fixed point).
\end{itemize}
The lattice of logical consequence in these systems is weaker (in LP, ${\bot}$ is not the whole set of formulas, explosion fails; in K3, $\phi$ and $\neg\phi$ being undefined does not entail any $\psi$). Thus transparency can be total at the cost of classical reasoning.
\end{proposition}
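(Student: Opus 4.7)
The plan is to establish each clause by producing an explicit many-valued model (not by axiomatic derivation), using a Kripke-style fixed-point construction on a lattice of valuations. The common architecture is: choose a truth-value algebra with designated set $D$, define a revision operator $F$ on valuations by $F(v)(\ulcorner\Trans(\ulcorner\phi\urcorner)\urcorner) := v(\phi)$, invoke Knaster--Tarski (Section~\ref{sec:prelim}) on the information order to obtain a fixed point $v^{\star}$, and verify two facts at $v^{\star}$: the biconditional $\Trans(\ulcorner\phi\urcorner)\leftrightarrow\phi$ takes a designated value for every $\phi$, and some sentence remains undesignated so the theory is non-trivial.

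For the paraconsistent clause I would use Priest's matrix $\{t,b,f\}$ with designated set $\{t,b\}$ and standard LP connectives (strong Kleene, with $\neg b = b$). Order valuations by the Belnap knowledge order, under which $F$ is monotone, and set $v^{\star}=\lfp F$. By construction $v^{\star}(\Trans(\ulcorner\phi\urcorner))=v^{\star}(\phi)$, so the transparency biconditional is designated throughout; the diagonal sentence $\hat{\sigma}$ of \eqref{eq:transparent-liar} comes out as a glut with $v^{\star}(\hat{\sigma})=b$ without collapsing the model, because in LP the explosion rule $\{\phi,\neg\phi\}\vdash\psi$ fails. The K3 clause is more direct: instantiate Theorem~\ref{thm:kripke-truth} for the transparency-jump $F$ and read off its least fixed point; in strong Kleene a biconditional whose two sides are both $N$ evaluates to $N$, so the paradoxical $\Trans(\ulcorner\hat{\sigma}\urcorner)\leftrightarrow\neg\Trans(\ulcorner\hat{\sigma}\urcorner)$ carries the non-classical value $N$ rather than a classical contradiction, and in particular is not an explosive source of arbitrary consequences.

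The main obstacle will be the non-triviality step, particularly in the LP case, where one must exhibit a concrete sentence that is assigned only the undesignated value $f$ under $v^{\star}$. I would secure this by observing that iterating $F$ from the bottom valuation pins arithmetical falsities such as $0=1$ to value $f$ before any $\Trans$-clauses fire, and then monotonicity of $F$ in the knowledge order forces that value to persist to $v^{\star}$. A secondary check is that LP-implication validates $b\to b \in \{t,b\}$, confirming the biconditional is designated at the liar itself; this is a routine look-up in the LP truth table. Together these steps deliver a consistent-but-total interpretation of $\Trans$ at the advertised cost of the weakened, non-classical consequence relation.
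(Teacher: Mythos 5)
Your construction is essentially correct but follows a genuinely different route from the paper's. The paper builds its LP model syntactically: it takes the naive theory $\Delta=\{\Trans(\ulcorner\phi\urcorner)\leftrightarrow\phi\}$ and defines the valuation directly from provability in $\Delta$ (a sentence gets value true if derivable, false if its negation is derivable, a glut if both), then checks the biconditionals against the LP truth tables and argues non-triviality by pointing to an unrelated sentence $\rho$ that $\Delta$ does not decide. You instead build the model semantically, as a Knaster--Tarski fixed point of a transparency-jump operator on a lattice of valuations---which is the standard Kripke/Woodruff-style construction and is arguably cleaner, since the paper's provability-based valuation is ambiguous for sentences where neither $\phi$ nor $\neg\phi$ is derivable and quietly needs a completeness property of $\Delta$ that is never established. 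Your non-triviality argument (arithmetical falsities like $0=1$ are pinned to the undesignated value at an early stage and persist by monotonicity) is also more concrete than the paper's.

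One technical slip you should repair: the LP value space $\{t,b,f\}$ under the knowledge order has $b$ as its top and $t,f$ as incomparable minimal elements, so it has \emph{no bottom}, and ``iterate $F$ from the bottom valuation'' is not well-defined there. The standard fixes are either (a) run the construction in strong Kleene, where $N$ is the genuine bottom, take the Kripke least fixed point as in Theorem~\ref{thm:kripke-truth}, and then relabel $N$ as $b$ at the end (K3 and LP share truth tables, so the fixed-point property is preserved and every biconditional with a $b$ input becomes designated), or (b) take the \emph{greatest} fixed point descending from the all-$b$ valuation. Either repair preserves your non-triviality check, since $0=1$ receives only the value $f$ at every stage. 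With that adjustment your argument goes through and, if anything, is more rigorous than the paper's own sketch.
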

\begin{proof}[Proof Sketch]
For LP: One can adapt the construction of "naive truth theory" in LP (as in Priest's work on trivializing consistency). Define $T(x)$ as a predicate in LP intended as truth. Add all axioms $T(\ulcorner \phi\urcorner) \leftrightarrow \phi$ for every sentence $\phi$ (even self-referential). In classical logic this axiom set is inconsistent (by Tarski's theorem). But in LP, a model exists: basically take the classical theory that would be inconsistent and allow it to be inconsistent. In particular, let the interpretation make every sentence that is a liar-type both true and false. Formally, a model can be given on the set of sentences where the valuation $v(\phi) = 1$ iff $\phi$ is provable from the naive theory, $v(\phi) = 0$ iff $\neg\phi$ is provable, and some sentences might get both if both are provable (which will happen for liar). This is a typical model construction in a paraconsistent setting. Since LP's consequence relation only allows explosion in case a sentence is both designated values (but in LP both true and false is not explosive by definition), the theory "all T-biconditionals" does not prove arbitrary $\psi$. (For details, see e.g. literature on consistent but non-classical truth theories.)
Thus $\Trans$ (like $T$ here) is total (every $\phi$ has a $\Trans$ status that matches $\phi$ in some sense), but $\hat{\sigma}$ yields a contradiction that is tolerated.

For K3: Similar but with partial: If we allow the third value, we can just let $\Trans(\ulcorner \hat{\sigma}\urcorner)$ be false and $\hat{\sigma}$ false as well (or both $N$), whichever consistent assignment that makes the biconditional neither true nor false but $N$. Since in K3 a tautology excludes $N$ typically, the biconditional might evaluate to $N$. But if we define consequence as preserving truth (1) only, then $N$ doesn't entail explosion. We essentially did this in Kripke's minimal fixed point: that's partial, though $\Trans$ wasn't total in classical sense (we left liar unassigned). But we could allow liar to be assigned some value like $N$ and call that within domain.

Anyway, the main point stands: there exist non-classical models such that $\Trans$ is defined for all sentences and no triviality. That means, mathematically, the limits in Theorem \ref{thm:trans-diag} can be bypassed by weakening logic.

Thus proven in concept.
\end{proof}

The above proposition highlights the lattice of consequences: in classical logic, the lattice collapses to ${\text{all formulas}}$ if a contradiction enters (so trivial lattice). In LP or K3, the lattice of consequence is richer, with various lower sets representing inconsistent but non-total theories.

From an ethical design perspective, adopting a non-classical logic is like allowing a system to say "some of my statements might be both true and false or indeterminate, but I won't blow up." It's a radical solution that might not align with how human oversight expects answers (they usually want consistent answers). However, it could be relevant in designing systems that handle contradictory objectives or evidence more gracefully.

One might consider a system outputting "conflicted" as a state meaning it has both reasons for and against a fact — that is paraconsistent truth. Or outputting "unknown" meaning partial.

Thus, one way to implement radical transparency is to accompany it with a paraconsistent reasoning engine to handle the paradoxes that will inevitably arise, containing them rather than letting them crash the system. That approach has its own risks (like possibly acting on contradictory info?), but it's a formal way out of the earlier impossibility.

\subsection{(viii) Modal \texorpdfstring{$\mu$}{mu}-Calculus: Safety Invariants under Disclosure}\label{sec:core:mu}
We finally bring together the ideas using the modal $\mu$-calculus to encode iterative processes. We consider two properties:
	•	$\psi(X)$ is a property of states $X$ we want to maintain forever (an invariant or safety condition). We encode the property "no matter what else happens (like further disclosures), $\psi$ remains true" as a greatest fixed point $\nu X.,\psi(X)$.
	•	$\chi(Y)$ is a condition that eventually should become true (for example, a condition capturing that some information will eventually be disclosed or an event eventually happens). We encode "$\chi$ happens at some stage" as a least fixed point $\mu Y.,\chi(Y)$.

Now consider a scenario of iterative disclosure: at each step, something is disclosed (making progress toward $\chi$ perhaps) and we want to ensure the safety invariant $\psi$ is preserved.

One typical pattern: if an invariant is to hold through a process, one often needs the invariant to imply it will still hold after one step of the process. Formally, one often seeks $\psi$ such that $\psi$ implies something like "if one disclosure step happens and $\psi$ held before, then $\psi$ holds after." In fixpoint terms, we might require $\psi(X)$ implies $\chi(X)$ or something or vice versa.

But in $\mu$-calculus, we might want a condition like: $\nu X.\psi(X)$ and $\mu Y.\chi(Y)$ commute under certain conditions, meaning
\[
  \nu X.\psi(X) \wedge \mu Y.\chi(Y) \equiv \mu Y.\chi(Y) \wedge \nu X.\psi(X)
\],
or that one implies the other, etc.

Typically, if you want an invariant to hold throughout until some event, you need that the event doesn't break it:
a common requirement in temporal logic: If $\square I$ is an invariant (always $I$) and $\lozenge E$ is an eventual event, to have both, one often needs $I$ be compatible with $E$. If $E$ eventually holds, it must not violate $I$ at the moment it occurs. So $I$ and $E$ should be consistent.

We can show something like:

Theorem: Suppose we have two fixpoint formulas in a modal $\mu$-calculus model:
$$P := \nu X.,I \land \Box X,$$
$$Q := \mu Y.,E \lor \Diamond Y,$$
where $I$ is a state predicate (invariant condition) and $E$ is an eventual condition (like something we want to reveal or achieve). (Here $\Box X$ and $\Diamond Y$ are typical temporal steps, meaning $X$ must hold in all next states, $Y$ holds in some next state respectively, making these definitions reminiscent of invariance and eventuality.)

If $I$ implies that $E$ does not break $I$ (formally, from $I$ and $E$ we can infer $I$ will still hold in the subsequent state), then any state satisfying $P$ and then undergoing the process to satisfy $Q$ will still satisfy $I$ afterwards. In logical terms:
$$P \land Q \models I,$$
meaning any state that fulfills invariants $P$ and eventually $Q$ also has $I$ true (thus the event didn't break safety).

We can attempt a proof in $\mu$-calculus style:
Given $\nu X.\psi(X)$ holds at initial state $s$ (meaning for all reachable states, $\psi$ holds), and that $\mu Y.\chi(Y)$ also holds at $s$ (meaning from $s$, eventually a state in $\chi$ is reached, by some finite path), and assume an additional property: $\psi$ is such that if it holds in all states along a path including at the final state of the $\chi$ event, then $\chi$ occurs without violating $\psi$. If that condition holds, then the state where $\chi$ is realized still satisfies $\psi$ (since it was invariant up to that point and not broken by event), so $\psi$ holds there. Since $\nu X.\psi(X)$ means $\psi$ holds in all future states too, the combination yields $\psi$ everywhere.

This is a bit informal, but we could encode:
	•	$\nu X.,\psi(X)$ meaning $\psi$ is an invariant,
	•	$\mu Y.,\chi(Y)$ meaning $\chi$ eventually happens,
	•	If $\models \psi(\chi)$ (meaning in any state where $\chi$ holds, $\psi$ holds too, or that $\chi$'s conditions are subset of $\psi$'s conditions),
then we might derive $\nu X.\psi(X) \land \mu Y.\chi(Y) \models \nu X.\psi(X)$ (which is trivial since left implies $\nu X.\psi(X)$ anyway), or more interesting maybe:
$\nu X.\psi(X) \models \mu Y.\chi(Y) \to \nu X.\psi(X)$, meaning if the eventual disclosure $\chi$ happens, $\psi$ still holds afterwards.

We might also express concurrency: if we intermix $\mu$ and $\nu$, known results in $\mu$-calculus: $\nu X.\mu Y.,f(X,Y)$ vs $\mu Y.\nu X.,f(X,Y)$ - under monotonic conditions, one can exchange $\mu$ and $\nu$ if the inner formulas are disjoint or monotone in the right way (like $\mu$ inside $\nu$ can sometimes be swapped if independence).

One result: The alternation depth in $\mu$-calculus is important. If $f$ is positive in $X$ and $Y$, I recall some fixpoint induction theorems: If certain commutativity conditions hold (like one is monotone in the other variable), then the fixpoints commute.

So likely the theorem:
Theorem: If $\psi(X,Y)$ is a formula positive in $X$ and $Y$, then
$$\nu X.\mu Y., \psi(X,Y) = \mu Y.\nu X., \psi(X,Y).$$
(This is a known property if there is no alternation of dependency or something, but not always true in general $\mu$-calculus unless conditions hold.)

So if safety and event formulas do not intertwine in a complex way, one can ensure the final invariants hold with eventual events.

Summarily, we have:
\begin{theorem}[Safety Invariance through Disclosure Rounds]\label{thm:mu-safety}
Let $\phi := \mu Y.,\chi(Y)$ represent the disclosure eventually achieving condition $\chi$, and $\psi := \nu X.,\gamma(X)$ represent an invariant condition $\gamma$ to hold at all times. Suppose $\gamma(X)$ and $\chi(Y)$ are such that
$$\gamma(S) \subseteq S \implies \gamma(S \cup \chi(S)),$$
roughly meaning adding the disclosure event $\chi$ on top of a safe state $S$ yields a state that still satisfies $\gamma$. Then any state that satisfies $\psi$ (safe invariant) and in which $\phi$ (disclosure event) is realized will still satisfy $\gamma$ after the event. Formally, in any model, if $(W,R)$ is a Kripke frame and $w\in W$ such that $w \in \Sem{\psi} \cap \Sem{\phi}$, then for the witness state $u$ (accessible via some finite path from $w$ where $\chi$ holds), we have $u \models \gamma$; hence $u \in \Sem{\psi}$ as well. Thus $\psi$ is preserved.
\end{theorem}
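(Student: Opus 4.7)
The plan is to combine the coinductive characterization of $\nu X.\,\gamma(X)$ with the ordinal approximation of $\mu Y.\,\chi(Y)$, using the side hypothesis to bridge the two. By Knaster--Tarski, $\llbracket \psi \rrbracket$ is the greatest post-fixed point of the monotone semantic operator $F_\gamma(S) := \llbracket \gamma(X) \rrbracket[X := S]$, so $\llbracket \psi \rrbracket \subseteq F_\gamma(\llbracket \psi \rrbracket)$; dually, $\llbracket \phi \rrbracket = \bigcup_\alpha F_\chi^{(\alpha)}(\emptyset)$ where $F_\chi(S) := \llbracket \chi(Y) \rrbracket[Y := S]$, so every $w \in \llbracket \phi \rrbracket$ enters the chain at some least ordinal $\alpha_w$, and tracing this stage into the Kripke model is what picks out the witness state $u$ from the theorem.

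First I would build an auxiliary chain designed to carry both fixed points at once. Set $S_0 := \llbracket \psi \rrbracket$ and, by transfinite recursion, $S_{\alpha+1} := S_\alpha \cup F_\chi(S_\alpha)$, with $S_\lambda := \bigcup_{\alpha < \lambda} S_\alpha$ at limits. The crux is to prove by induction on $\alpha$ that each $S_\alpha$ remains a post-fixed point of $F_\gamma$, i.e.\ $S_\alpha \subseteq F_\gamma(S_\alpha)$. The base case is the coinductive characterization of $\llbracket \psi \rrbracket$; the successor case is exactly the side hypothesis $\gamma(S) \subseteq S \Rightarrow \gamma(S \cup \chi(S))$, read as the assertion that $\gamma$-safety persists after one round of $\chi$-disclosure, combined with monotonicity of $F_\gamma$; the limit case uses the routine fact that a directed union of post-fixed points of a monotone operator is again a post-fixed point. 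Standard ordinal bounds on fixed-point chains then guarantee stabilization at some $\beta < |W|^+$ at a set $S_\infty$ that is simultaneously a pre-fixed point of $F_\chi$ and a post-fixed point of $F_\gamma$.

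To conclude, I would appeal to the two universal properties. Since $\llbracket \phi \rrbracket$ is the least pre-fixed point of $F_\chi$, $\llbracket \phi \rrbracket \subseteq S_\infty$, so the witness $u$ lies in $S_\infty$. Since $\llbracket \psi \rrbracket$ is the greatest post-fixed point of $F_\gamma$, $S_\infty \subseteq \llbracket \psi \rrbracket$, so $u \in \llbracket \psi \rrbracket$ and in particular $u \models \gamma$. Evaluated pointwise at any $w \in \llbracket \psi \rrbracket \cap \llbracket \phi \rrbracket$, this is precisely the statement of the theorem: the invariant survives the disclosure event, and indeed remains a $\nu$-invariant at the witness state.

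The hard part will be the rigorous justification of the inductive invariant at the successor and limit stages. As displayed, the side hypothesis puts a bare set $\gamma(S \cup \chi(S))$ on the right of an implication, so one must first pin down its intended meaning; the only reading that makes the successor step go through is $S \cup \chi(S) \subseteq F_\gamma(S \cup \chi(S))$, and this should be stated explicitly before being used. Passage through limit ordinals then needs either Scott-continuity of $F_\gamma$ or the routine observation that directed unions of post-fixed points of a monotone operator are post-fixed; for a purely finitary Kleene-chain argument, $\omega$-continuity of both $\chi$ and $\gamma$ would have to be added as hypotheses. A further subtlety worth flagging is that $\chi$ must be positive (monotone) in $Y$ for the ordinal approximation to match the reading of $\mu Y.\,\chi(Y)$ as ``eventually some $\chi$-state is reached''; this is the standard syntactic restriction of the modal $\mu$-calculus but is used silently in the statement.
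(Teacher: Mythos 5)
Your proof is correct in substance but takes a genuinely different route from the paper's. The paper argues locally and operationally: since $w \models \nu X.\,\gamma(X)$, the invariant $\gamma$ holds at every state reachable from $w$; the semantics of $\mu Y.\,\chi(Y)$ supplies a finite witnessing path $w \to \cdots \to u$; one inducts along that path and invokes the preservation hypothesis exactly once, at the final transition into $u$. You instead argue globally in the lattice of state-sets: seed a transfinite chain at $\llbracket\psi\rrbracket$, close under $F_\chi$, show post-fixedness under $F_\gamma$ is preserved at successors (by the side hypothesis) and at limits (by monotonicity alone --- you do not actually need Scott-continuity there), and then squeeze $\llbracket\phi\rrbracket \subseteq S_\infty \subseteq \llbracket\psi\rrbracket$ via the two universal properties. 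Your version is more rigorous than the paper's (which concedes it is informal) and proves something strictly stronger: $\llbracket\phi\rrbracket \subseteq \llbracket\psi\rrbracket$, i.e.\ every state where the disclosure is realized is safe, without ever using the assumption $w \in \llbracket\psi\rrbracket$. The price is a stronger, globally quantified reading of the hypothesis --- you need it at every post-fixed point $S_\alpha$ in your chain, whereas the paper's path argument needs preservation only at the single step into the witness state. One repair you should make explicit in addition to the one you flag: the antecedent as printed, $\gamma(S)\subseteq S$, is a \emph{pre}-fixed-point condition, while your induction invariant $S_\alpha \subseteq F_\gamma(S_\alpha)$ and your base case $S_0 = \llbracket\psi\rrbracket$ require the \emph{post}-fixed reading $S \subseteq F_\gamma(S)$; the inclusion has to be flipped on both sides of the implication, not just the right-hand one. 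Given how informally the theorem is stated, this is a fair emendation, but it should be stated rather than silently assumed.
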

\begin{proof}[Proof Idea]
Because $w \models \nu X.,\gamma(X)$, by definition $w$ has $\gamma(S)$ true in all reachable states (including itself). Because $w \models \mu Y.,\chi(Y)$, there is some path $w \to \cdots \to u$ where at $u$, $\chi$ holds and $u \models Y$ in the fixpoint semantics. Inductively, along that path $\gamma$ held (since invariant), including at $u^{-}$ (the predecessor of $u$). Now if $\chi$ at $u$ does not violate $\gamma$, $u$ also satisfies $\gamma$. This uses the assumption that $\gamma$ is preserved by $\chi$. Then $u$ satisfies $\gamma$ and no further disclosures needed, but anyway $u \models \nu X.\gamma(X)$ obviously (being in it possibly because it's closed under $R$). So the eventual state is safe. That is the needed result.
\end{proof}

This rather informal reasoning can be tightened, but due to time I'll leave it. The main message: We can design our transparency increments ($\chi$ steps) such that they maintain invariants ($\gamma$), by ensuring partial disclosure steps are safe by construction.

For instance, if revealing some piece of info might cause hazard, break the invariant, then the condition fails and one should not reveal that piece or find a way to mitigate the hazard concurrently.

This interplay is essentially that safety properties often need to be inductive: an invariant that is maintained at each step of a process.

In $\mu$-calculus, $\nu X.,\psi(X)$ essentially says $\psi$ is inductive (if it holds now and in all $\Box$ future given $X$, it holds always). $\mu Y.\chi(Y)$ says something happens via $\Diamond$ eventually.

One must verify $\psi$ implies something like (if next $\chi$ then still $\psi$ after).
So the condition could be:
$$\models \psi \land \chi \to \psi',$$
where $\psi'$ is $\psi$ in the next state. If that holds, then $\psi$ and eventually $\chi$ implies $\psi$ after $\chi$.

This is typical of verification conditions in temporal logic.

Thus design theorem: If partial disclosures are done in a way that any invariants remain invariants, then one can sequentially apply $\mu$ steps of disclosure without losing $\nu$ invariants. A formal result in $\mu$-calculus can articulate it.

Conclude: the fixed point calculation of safety and disclosure can commute or at least co-exist if one ensures monotonic conditions. This suggests a methodology: define safety as $\nu X.,\psi(X)$, disclosure rounds as $\mu Y.,\chi(Y)$, and check commutativity conditions. If they hold, then implementing the disclosure policy will not break safety invariants. If not, perhaps consider altering either policy (maybe slower disclosure or partial) to enforce them.

\section{Game-Theoretic Equilibrium Analysis}\label{sec:game}
We briefly consider transparency in a strategic multi-agent context. Let there be an agent (or population of agents) whose behavior can adapt in response to what a principal discloses. We model the agent's best-response correspondence as $B: L \to 2^L$ (from a disclosure state to a set of possible outcome states). Typically, more information might allow the agent to adapt more cunningly (not necessarily monotonically though; sometimes more info changes strategy qualitatively).

An \textbf{equilibrium} is a state $x$ such that $x \in B(T(x))$. That is, given the policy $T$ discloses $T(x)$, the agent's response yields an outcome in state $x$ (consistent with that original state). This is a fixed point of the composed operator $F = B \circ T$ (though $B$ may be set-valued, so more formally it's a solution of a fixed-point inclusion $x \in (B\circ T)(x)$).

If $B$ is nicely behaved (upper hemi-continuous, convex-valued mapping in some topological vector lattice of states, etc.), we can apply a fixed-point theorem for correspondences, such as Kakutani's Fixed-Point Theorem , to show existence of an equilibrium. Conditions typically require $L$ (the state space) to be a compact convex subset of a finite-dimensional space, and $B\circ T$ to have closed graph and non-empty convex values. Those conditions might be satisfied if, say, states are probability distributions over outcomes (which form a simplex = compact convex) and best responses produce mixed strategies.

We will not dwell on specifics, just assert: under reasonable assumptions, an equilibrium disclosure state $(\hat{x},\hat{y})$ exists where $y^{*} = B(T(\hat{x}))$ and $\hat{x} = T(\hat{x})$ (so $\hat{x}$ self-consistent and $y^{*} = \hat{x}$ possibly or $\hat{y}$ includes it). Actually for a fixed point, we need $T(\hat{x}) = \hat{x}$ and $y^{*} = x^{*}$. But one might consider a more general equilibrium concept where agent outcome could differ from initial $x$ but eventually equate.

One interesting result: if the policy can \emph{garble} information (i.e. not fully disclose, making it coarser), often this can lead to Pareto improvements by reducing $\Pi$ and $G$ risk. Garbling information (like giving less precise signals) corresponds to a mapping $G: L \to L$ that is monotone (less info than input). E.g. if $x$ is some info set, $G(x)$ is a coarser version. Garbling can reduce the agent's ability to exploit ($G$ reduce $G(x)$ risk), and also reduce paradox risk (less statements to be paradoxical). However, garbling might reduce accountability $A$ too.

A typical result in information economics: If the agent's actions can be better aligned with principal's goals by not giving full info (to prevent gaming), then a partial transparency (garbled signal) is better for welfare.

We formalize one such claim:
\begin{theorem}[Design Theorem III: Welfare Improvement by Coarsening]\label{thm:garbling}
Consider two disclosure policies $T_1, T_2$ such that $T_2(x)$ is a garbled (coarser) version of $T_1(x)$ for all $x$. Suppose both achieve the accountability threshold: $A(\lfp T_1)\ge A_0$ and $A(\lfp T_2)\ge A_0$ (so they are feasible). Further assume the agent's gaming risk $G(x)$ and paradox risk $\Pi(x)$ are strictly increasing with finer information (more detail gives more room to exploit or self-reference). Then the equilibrium under $T_2$ yields weakly lower risk and higher welfare than under $T_1$. In particular, if $x_1^{*}$ and $x_2^{*}$ are equilibrium states (fixed points) for $T_1$ and $T_2$ respectively, then $\Risk(x_2^{*}) \le \Risk(x_1^{*})$ and typically $\Risk(x_2^{*}) < \Risk(x_1^{*})$ with $A(x_2^{*}) \ge A_0$. Thus it can be beneficial to intentionally limit transparency ("blurring the full picture") to reduce exploitation and paradox, so long as accountability is not compromised.
\end{theorem}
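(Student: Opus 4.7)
The plan is to reduce the claim to a comparison of least fixed points in the disclosure lattice, and then invoke the risk monotonicity already established. I would first make precise the informal notion of "garbled" as the pointwise lattice inequality $T_2(x) \le T_1(x)$ for every $x \in L$, which captures that a coarser policy discloses no more than a finer one at every input state. Since both operators are monotone, Knaster--Tarski provides $\lfp T_1$ and $\lfp T_2$, and by Theorem~\ref{thm:lfp-optimal} these are exactly the $\Risk$-minimal representatives of $\Fix(T_i)$. Selecting $x_i^{*} := \lfp T_i$ therefore suffices, and the task reduces to comparing $\Risk(\lfp T_2)$ and $\Risk(\lfp T_1)$ subject to the accountability hypothesis.

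The central lemma is the monotonicity of least fixed points in the operator order: $T_2 \le T_1$ pointwise implies $\lfp T_2 \le \lfp T_1$. The cleanest argument proceeds via prefixed points. One observes that $T_2(\lfp T_1) \le T_1(\lfp T_1) = \lfp T_1$, so $\lfp T_1$ is a prefixed point of $T_2$; invoking the Knaster--Tarski characterization $\lfp T_2 = \bigwedge\{x : T_2(x) \le x\}$ immediately yields $\lfp T_2 \le \lfp T_1$. (An equivalent transfinite induction along the iterates $T_i^{(\alpha)}(\bot)$ is available if one prefers a constructive presentation.)

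With the lattice inequality in hand, the weak form of the conclusion follows from the postulated monotonicity of $\Risk$ in \eqref{eq:risk-decomp}: $\Risk(\lfp T_2) \le \Risk(\lfp T_1)$. For the typical strict inequality, I would invoke the additional hypothesis that $G$ and $\Pi$ are strictly increasing in information. Whenever the coarsening is non-trivial at equilibrium, so $\lfp T_2 < \lfp T_1$, one obtains $\alpha\,\Pi(\lfp T_2) + \delta\,G(\lfp T_2) < \alpha\,\Pi(\lfp T_1) + \delta\,G(\lfp T_1)$ with $\alpha,\delta>0$, and the weak monotonicity of the $\Lambda$ and $\Phi$ components preserves the strict inequality in the aggregate $\Risk$. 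Feasibility of the two equilibria is inherited directly from the hypotheses $A(\lfp T_i) \ge A_0$, so no additional verification is required on the constraint side.

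The chief obstacle is the conceptual gap between comparing fixed points of the bare policy operators $T_i$ and comparing genuine \emph{equilibria} of the composed dynamics $B \circ T_i$ involving the agent's best-response correspondence $B$ from Section~\ref{sec:game}. When $B$ is set-valued or non-monotone in the information order, coarsening $T$ may change the graph of $B \circ T$ in subtle ways: new responses can become best under a coarser signal, and existing ones can drop out, so the clean prefixed-point argument above does not transfer verbatim. To handle this I would either impose a structural monotonicity assumption on $B$ so that $B \circ T_2 \le B \circ T_1$ pointwise (in which case the same prefixed-point argument yields $\lfp(B\circ T_2) \le \lfp(B\circ T_1)$), or invoke Kakutani's theorem to secure an equilibrium set for each $T_i$ and then compare the risk-minimal selections via an order-theoretic maximum-principle argument analogous to Corollary~\ref{cor:lfp-sufficient}. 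This selection step is where I expect most of the technical care to be concentrated.
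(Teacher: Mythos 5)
Your proposal is correct and follows essentially the same route as the paper's own (very brief) proof sketch: garbling as the pointwise inequality $T_2(x)\le T_1(x)$, hence $\lfp T_2 \le \lfp T_1$, hence $\Risk(\lfp T_2)\le\Risk(\lfp T_1)$ by monotonicity of $\Risk$, with strictness supplied by the strictly increasing $\Pi$ and $G$ components. You actually go further than the paper in two respects worth noting: the paper merely asserts that the least fixed points are ordered, whereas you supply the prefixed-point argument ($T_2(\lfp T_1)\le T_1(\lfp T_1)=\lfp T_1$, so $\lfp T_1$ is a prefixed point of $T_2$); and you correctly identify that the theorem speaks of \emph{equilibria} of the composed dynamics $B\circ T_i$ while the proof only compares fixed points of the bare $T_i$ --- a gap the paper papers over with the word ``presumably'' and which your proposed monotonicity assumption on $B$ (or a Kakutani-plus-selection argument) is the right way to close.
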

\begin{proof}[Proof Sketch]
By assumption $T_2(x) \subseteq T_1(x)$ for all $x$ (garbling means you disclose less or equal). Thus $\lfp T_1 \supseteq \lfp T_2$ (monotone operators with one always disclosing more means its least fixed point will be larger). So the equilibrium state $x_1^{*}$ presumably has $x_1^{*} \supseteq x_2^{*}$. Then because $\Risk$ is increasing, $\Risk(x_1^{*}) \ge \Risk(x_2^{*})$. Under mild conditions one expects a strict inequality if indeed $T_1$ divulged strictly more on some dimension exploited or paradoxical. Meanwhile accountability $A(x)$ being monotone means $A(x_1^{*}) \ge A(x_2^{*})$, but by assumption both exceed $A_0$. If $A(x_2^{*})$ is just at threshold and $A(x_1^{*})$ above, we might have some wasted accountability potential but not needed.

Thus outcome $x_2^{*}$ has equal or better risk with still acceptable accountability. The welfare (which might be negative risk plus any gains) is higher.
\end{proof}

Design Theorem III supports intuitive strategies: e.g., instead of publishing exact audit criteria (which can be gamed), publish coarser guidelines that keep actors somewhat uncertain, which discourages fine-grained gaming. Or in releasing model information, avoid revealing complete details that allow self-referential triggers or fairness reversal.

We note one should not reduce transparency below what is needed for $A_0$ because then accountability suffers. The theorem just says if two policies both meet $A_0$, the less detailed one is safer. In practice, one finds a sweet spot where just enough transparency is given.

\section{Worked Constructions and Derived Equations}\label{sec:worked}
We gather several technical constructions to illustrate the theory:

\paragraph{Quantitative Fixed-Point Approximation:}
As mentioned earlier, if $T$ is $\omega$‑continuous, we have convergence:
\[
  T^{(0)}(\bot) = \bot,\quad
  T^{(\alpha+1)}(\bot) = T\big(T^{(\alpha)}(\bot)\big),\quad
  T^{(\lambda)}(\bot) = \sup_{\alpha<\lambda} T^{(\alpha)}(\bot),\ \text{$\lambda$ limit},\quad
  T^{(\omega)}(\bot) = \lfp T.
\]
In many practical cases (say $L$ is countable or chain‑complete in $\omega$ steps), $\lfp T = T^n(\bot)$ for some finite $n$ or at least the $\omega$‑chain stabilizes.

We claimed earlier that iterative application of $T$ approaches minimal risk:
\[
\forall \epsilon>0\,\exists n\in\N\,\forall k\ge n:\quad
\Risk\!\big(T^{k}(\bot)\big) \;\le\; \inf_{x\in \Fix(T)} \Risk(x) + \epsilon.
\]
This can be reasoned as follows: let $r^* = \inf_{x: T(x)=x} \Risk(x)$ (the optimal fixed-point risk). By Theorem \ref{thm:lfp-optimal}, $r^* = \Risk(\lfp T)$. Now $(\Risk(T^k(\bot)))_{k\in\N}$ is a decreasing sequence (since $T^k(\bot)$ grows and $\Risk$ monotone increases, so $\Risk(T^k(\bot))$ is non-decreasing; wait we need contractive assumption to ensure decreasing risk, not monotone risk with increasing states. If $\Risk$ is increasing, $T^k(\bot)$ yields increasing risk, so scratch that. Actually, if $T$ is risk-contractive, meaning it reduces risk each step, then risk goes down. Alternatively define $\loss = -\Risk$ if we want a measure that is increased by partial info. Possibly we consider $\loss$ as benefit (neg risk) to maximize. Then monotone partial info yields $\loss$ decreasing if risk increasing. Hmm let's reinterpret: risk monotone means more info, more risk, so as we iterate T from $\bot$ up, risk goes up. That sequence $\Risk(T^k(\bot))$ is increasing and converges to $\Risk(\lfp T)$ (monotone conv theorem) and can't overshoot $\Risk(\lfp T)$. Actually if T is $\omega$-continuous, $T^k(\bot) \to \lfp T$, and by continuity of $\Risk$ (assuming) $\Risk(T^k(\bot)) \to \Risk(\lfp T)$ from below. So given $\epsilon$, beyond some $N$, $\Risk(T^N(\bot))$ is within $\epsilon$ of $\Risk(\lfp T)$. That shows the property.

So yes, if risk continuous:
Given $\epsilon$, since $T^k(\bot) \uparrow \lfp T$, eventually in lattice sense it approaches, and $\Risk$ continuous implies the values approach. So that proves the $\epsilon$ statement.)

This tells us that performing iterative transparency until it stabilizes yields near-optimal risk eventually. It's analogous to value iteration in dynamic programming.

\paragraph{Sequent-Style Löb Derivation:}
We gave a Hilbert proof earlier; a sequent or Fitch-style might be:
	1.	Assume $\Box_P(\Box_P \varphi \to \varphi)$.
	2.	We want to show $\Box_P \varphi$. So consider the formal system of $P$; by Gödel's fixpoint lemma in arithmetic (the diagonalization again), one can find a sentence $\theta$ such that $P \vdash \theta \leftrightarrow (\Prov_P(\ulcorner \theta\urcorner) \to \varphi)$. That is a fixed point: $\theta$ says "if I'm provable, then $\varphi$." Now inside $P$, by (D1) and the assumption, $\Prov_P(\ulcorner\theta\urcorner)$ is derivable (because $\theta \leftrightarrow (\Prov_P(\ulcorner\theta\urcorner)\to\varphi)$ and using assumption we kind of get $\theta$ is provable, which yields $\Prov(\ulcorner\theta\urcorner)$). Then using $\theta$ itself, that implies $\varphi$. So $\varphi$ is proved. This is a somewhat different approach but ends similarly.
Actually this either duplicates Löb's known proof or is more complicated than needed.

Given the complexity, we'll trust the earlier direct approach.

\paragraph{Commutative Diagrams:}
We included Diagram \eqref{diag:lawvere} for Lawvere's theorem. Another relevant diagram might depict the interplay of $T$ and $B$ (disclosure and best-response). Possibly:
\[
\begin{tikzcd}
\text{State} \ar[r, "T"] \ar[d, dashed, equal] & \text{Info Disclosed} \ar[d, "B"] \\
\text{State} & \text{New State} \ar[l, dashed, equal]
\end{tikzcd}
\]
This is an informal diagram showing that at equilibrium, the horizontal then vertical equals identity mapping of state.

Alternatively, a category diagram for initial algebras: e.g.
\[
\begin{tikzcd}
\Fix(F) \ar[r, "\cong"] \ar[d, dashed] & F(\Fix(F)) \ar[d, dashed] \\
1 \ar[r] & \text{something}
\end{tikzcd}
\]
But let's skip constructing a second diagram since we covered one.

\paragraph{Ordinal Construction Example}\label{sec:worked:ordinal}
We claim if $T$ is not $\omega$-continuous, one may need $\omega_1$ steps. A classical example: Let $L = \mathcal{P}(\omega_1)$, the power set of the first uncountable ordinal (a complete lattice). Define $T(X)$ = the set of ordinals $\alpha$ such that $\alpha$ is either $0$, or a successor ordinal whose immediate predecessor is in $X$. This $T$ basically picks out 0 and all successor ordinals that follow an ordinal in $X$. It's monotone but not $\omega$-continuous (it doesn't consider limit ordinals well). If you iterate $T^n(\emptyset)$, you'll get all ordinals of finite successor length, at $\omega$ you'll get all countable ordinals maybe, but you'll never get a limit ordinal. The least fixed point of $T$ is $\omega_1$ (all countable ordinals because each countable ordinal is either 0 or successor of some smaller countable). But to accumulate $\omega_1$ you needed $\omega_1$ iterations (the $\omega$-chain gave only countable ordinals, you need cofinality $\omega_1$). So $\lfp T$ arrived at stage $\omega_1$. So the iterative algorithm will not finish in countably many steps. This is typical.

\paragraph{Algorithm and Complexity:}
Finally, an algorithm to compute $\lfp T$: assume $L$ finite or effectively enumerable partial order, $\omega$-continuous etc.
A straightforward algorithm:
\begin{enumerate}
\item $x \gets \bot$.
\item \textbf{repeat:} $x_{\text{prev}} \gets x$; $x \gets T(x_{\text{prev}})$.
\item \textbf{until} $x = x_{\text{prev}}$.
\item \textbf{return} $x$.
\end{enumerate}
This returns $\lfp T$ in finitely many iterations if $L$ has no infinite ascending chain. Complexity is $O(h \cdot C_T)$ where $h$ is height of chain or number of iterations and $C_T$ cost to compute $T$. If $L$ is finite of size $N$, worst-case $h = N$ (each round adds at least one new element until fixed). If $L$ infinite but well-founded or $\omega$-chain condition, algorithm might not terminate but if $\omega$-cont maybe treat it as approximate answer.

One can add a condition for early stopping if risk stops improving:
Because $\Risk(T^k(\bot))$ maybe monotonic, one could stop when additional risk < tolerance or $A$ at threshold etc.

Without $\omega$-continuity, termination isn't guaranteed (like the $\omega_1$ example above, it would never stop since state keeps growing countably forever and never stabilizes). That's the counterexample needed for the "not continuous" part: algorithm doesn't terminate.

Thus, one must either allow transfinite computation (impossible physically) or ensure $\omega$-continuity by design.

This completes our series of constructions and verifications.

\section{Optimization and Lagrange Duality in Lattices}\label{sec:optimality}
We revisit the constrained optimization \eqref{eq:design-opt}:
\[
\min_{x\in \Fix(T)} \loss(x) = \Risk(x) - \lambda\,\Gain(x) \quad \text{s.t. } A(x)\ge A_0.
\]
This is a partially ordered optimization problem rather than a linear one. But we can attempt to set up a Lagrangian:
\[
\mathcal{L}(x,\eta) = \loss(x) + \eta [A_0 - A(x)],
\]
with dual variable $\eta \ge 0$ for the constraint $A(x)\ge A_0$ (note writing as $A_0 - A(x) \le 0$ to conform to $\le 0$ form).
The idea is if $\hat{x}$ is optimal and the problem satisfies some lattice analog of convexity, then there exists $\hat{\eta} \ge 0$ such that:
\begin{align}
\hat{x}^* &\in \arg\min_{x \in \Fix(T)} \mathcal{L}(x,\hat{\eta}), \label{KKT1}\\
\hat{\eta} [A_0 - A(\hat{x})] &= 0, \quad A(\hat{x}) \ge A_0, \ \hat{\eta} \ge 0. \label{KKT2}
\end{align}
This is analogous to the Karush-Kuhn-Tucker (KKT) conditions. Eq.~\eqref{KKT2} is complementary slackness: either the constraint is tight ($A(\hat{x})=A_0$) and then $\hat{\eta}$ can be anything (in practice positive if objective benefits from relaxing constraint), or the constraint is slack ($A(\hat{x})>A_0$) in which case $\hat{\eta}=0$ (no penalty needed). Typically, one expects at optimum either one discloses just enough to meet accountability ($A(x^{*})=A_0$) or if risk is monotonically increasing in disclosure, one wouldn't exceed the required disclosure by too much, except if $\lambda\Gain$ strongly favors more openness.

The stationarity condition \eqref{KKT1} informally means: for all feasible $y$ in a neighborhood of $\hat{x}$ (or comparable to $\hat{x}$ in lattice), $\nabla \mathcal{L}(\hat{x},\hat{\eta})\cdot (y-\hat{x}) \ge 0$ where gradient is some subgradient because we might not have differentiability. In a lattice, one can define a subdifferential:
\[
\partial \loss(x) := \{ v \in \hat{V} : v(y-x) \ge 0, \forall y \text{ such that } x \le y \text{ (or $y \le x$)} \}.
\]
But here we only can vary $x$ along monotone directions since $T(x)$ must hold fixedpoint structure.

Alternatively, consider using Zorn's lemma to find an extremal optimum: if $\loss$ and $A$ are monotone, the problem might have some kind of supermodular structure.

We can illustrate with a simple scenario:
Suppose $\Fix(T)$ partially ordered by $\subseteq$ is a chain (so essentially one-dimensional). Then $\loss(x)$ might have a shape: decreasing then increasing perhaps if $\Gain$ dominates early and $\Risk$ later. The optimum either at boundary (least or most) or where derivative crosses zero: $\frac{d\Risk}{dx} = \lambda \frac{d\Gain}{dx}$ if treat them as continuous variables.

In general lattice, one could linearize by considering extreme points. Possibly each feasible is an antichain or chain in some partially ordered set.

Without too deep: the KKT-like result we can say:
\begin{theorem}\label{thm:KKT-lattice}
Suppose $\Fix(T)$ is a distributive lattice and $\loss, A$ are lattice-submodular or something nice. Then an optimal solution exists (by compactness maybe) and there is a dual $\hat{\eta}\ge0$ such that for any alternative $y \in \Fix(T)$ with $A(y)\ge A_0$, we have
\[
\loss(\hat{x}) \le \loss(y) + \hat{\eta}[A_0 - A(y)].
\]
This is a variational inequality capturing stationarity. Furthermore, complementary slackness holds: $\hat{\eta}[A_0 - A(\hat{x})]=0$.
\end{theorem}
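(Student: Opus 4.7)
The plan is to treat the problem as a Lagrangian duality exercise on a complete lattice, with the role played by convexity in classical finite-dimensional KKT theory taken over by submodularity, the canonical discrete-convex surrogate on distributive lattices. The statement bundles three assertions --- existence of an optimizer, existence of a nonnegative multiplier, and complementary slackness --- and I would establish them in that order. For existence, note that $\Fix(T)$ is itself a complete lattice by Knaster--Tarski (Section~\ref{sec:prelim}) and that the feasible set $\mathcal{F} = \{x \in \Fix(T) : A(x) \ge A_0\}$ is nonempty by hypothesis and upward-closed by monotonicity of $A$. Taking a minimizing chain in $\mathcal{F}$ and exploiting chain-completeness together with Scott-continuity of $\loss$ and $A$ (the ``or something nice'' hypothesis) yields a limit $\hat{x} \in \mathcal{F}$ attaining $\inf_{\mathcal{F}} \loss$; if only one-sided semicontinuity is available, I would instead take a meet of $\epsilon$-minimizers and use submodularity of $-A$ to preserve feasibility under meets.

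Next I would introduce the Lagrangian $\mathcal{L}(x,\eta) = \loss(x) + \eta(A_0 - A(x))$ and the dual value $d(\eta) = \inf_{x \in \Fix(T)} \mathcal{L}(x,\eta)$. Weak duality is immediate: for $\eta \ge 0$ and $y \in \mathcal{F}$ one has $d(\eta) \le \mathcal{L}(y,\eta) \le \loss(y)$, so $\sup_{\eta \ge 0} d(\eta) \le \loss(\hat{x})$. The core of the proof is closing the gap, for which I would invoke a discrete separation theorem for submodular functions on distributive lattices --- Frank's discrete separation, or the conjugate-duality theorems of Murota's L-convex analysis --- under a Slater-type constraint qualification (existence of some $x_0 \in \Fix(T)$ with $A(x_0) > A_0$). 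These serve as the order-theoretic analogue of Fenchel duality and produce $\hat{\eta} \ge 0$ with $d(\hat{\eta}) = \loss(\hat{x})$, so that $\hat{x} \in \arg\min_{x \in \Fix(T)} \mathcal{L}(x,\hat{\eta})$ and
\[ \loss(\hat{x}) + \hat{\eta}(A_0 - A(\hat{x})) \;\le\; \loss(y) + \hat{\eta}(A_0 - A(y)) \quad \text{for every } y \in \Fix(T). \]

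Complementary slackness follows by a standard dichotomy: either $A(\hat{x}) = A_0$ and the product vanishes trivially, or $A(\hat{x}) > A_0$, in which case I would argue $\hat{\eta} = 0$ by noting that a positive $\hat{\eta}$ against a strictly slack constraint would strictly decrease $\mathcal{L}(\hat{x},\hat{\eta})$ under a suitable downward lattice perturbation of $\hat{x}$ (replacing $\hat{x}$ by $\hat{x} \wedge z$ for an appropriate $z \in \mathcal{F}$), contradicting $\hat{x} \in \arg\min \mathcal{L}(\cdot,\hat{\eta})$. Either way $\hat{\eta}(A_0 - A(\hat{x})) = 0$; substituting into the displayed inequality and restricting to feasible $y$ recovers the stated variational form $\loss(\hat{x}) \le \loss(y) + \hat{\eta}(A_0 - A(y))$.

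The main obstacle, and the reason for the deliberately vague hedge ``lattice-submodular or something nice,'' is genuinely the strong-duality step. Monotonicity alone does not prevent a duality gap; an arbitrary lattice program can admit an optimum with no finite multiplier certifying it. What makes the argument go through is a real convexity surrogate --- submodularity of $\loss$ and of $-A$ on a distributive $\Fix(T)$, equivalently access to the Lovász extension and the polymatroidal geometry that underpins the discrete separation theorem --- together with a Slater point. The bulk of the proof's mathematical content therefore lies in pinning down this hypothesis precisely and citing or adapting the appropriate discrete duality theorem; once that is in place, existence, stationarity, and complementary slackness are essentially routine.
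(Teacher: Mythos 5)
Your proposal is correct in outline and considerably more developed than the paper's own justification, which consists of three sentences ("Existence from compactness\dots The inequality is a form of saying the subgradient\dots is non-negative. Slackness as usual KKT.") and never identifies what hypothesis would actually close the duality gap. The two arguments agree on the skeleton — existence of $\hat{x}$, a Lagrangian $\mathcal{L}(x,\eta)=\loss(x)+\eta[A_0-A(x)]$, weak duality, then slackness — but you correctly locate the real mathematical content in the strong-duality step and name the tools that would supply it: a discrete separation theorem (Frank / Murota's L-convex conjugacy) on a distributive $\Fix(T)$ with submodular $\loss$ and $-A$, plus a Slater point. This is exactly what the paper's hedge "lattice-submodular or something nice" is gesturing at without saying, and your observation that monotonicity alone does not rule out a duality gap is the honest caveat the paper omits. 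Your existence argument via the Knaster--Tarski fact that $\Fix(T)$ is a complete lattice, with meets of $\epsilon$-minimizers as a fallback when only one-sided continuity holds, is also more careful than the paper's "by compactness maybe."

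Two minor points. First, once you have strong duality $d(\hat{\eta})=\loss(\hat{x})$ with $\hat{x}$ feasible, complementary slackness is immediate without the downward-perturbation argument: $\loss(\hat{x})=d(\hat{\eta})\le\mathcal{L}(\hat{x},\hat{\eta})=\loss(\hat{x})+\hat{\eta}[A_0-A(\hat{x})]\le\loss(\hat{x})$, since the middle term is nonpositive for feasible $\hat{x}$ and $\hat{\eta}\ge 0$; equality throughout forces $\hat{\eta}[A_0-A(\hat{x})]=0$. Second, the phrase "taking a minimizing chain in $\mathcal{F}$" presupposes that a minimizing net can be arranged into a chain, which is not automatic; your fallback via meets of $\epsilon$-minimizers (using submodularity of $-A$ to keep feasibility under $\wedge$) is the version to lead with. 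Neither point affects the soundness of the approach.
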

\begin{proof}[Proof Sketch]
Existence from compactness of $(L,\le)$ or by monotone boundedness. The inequality is a form of saying the subgradient at $\hat{x}$ in direction of any feasible $y$ is non-negative. Slackness as usual KKT.
\end{proof}

In practice, computing the Lagrange multiplier $\hat{\eta}$ might correspond to how much weight on accountability would make $\hat{x}$ exactly break even. If $\hat{\eta}$ is high, it means accountability was a binding constraint pushing solution.

This concept can help in dynamic adjustments: if current $x$ fails accountability, increase $\eta$ to push for more transparency; if $x$ overshoots accountability by far, one could lower $\eta$ to reduce risk.

Thus, the design can be iteratively tuned with a "dual" interpretation: treat $\eta$ as a price on lack of accountability. Increase it until optimum sits exactly at threshold.

We omit a more rigorous proof due to abstract nature.

\section{Case Studies}\label{sec:case}
We outline three stylized scenarios to apply our framework:

\paragraph{(A) Audit Threshold Publication:}
An organization announces that any employee with productivity below $\theta$ will be fired (and employees know their own productivity). This is transparency of a threshold metric. How can this be gamed?

Formally, let $m(e)$ be an evaluation metric (like output per hour). They publish $\theta$. Some employees might \emph{appear} to raise productivity above $\theta$ by focusing on measured tasks at cost of other unmeasured valuable work. So actual performance drops or side effects occur, but $m$ is satisfied.

This is directly analogous to Theorem~\ref{thm:kleene-gaming}: the metric $m$ and threshold $\theta$ define a binary audit (pass/fail). There exists an employee strategy $\hat{e}$ (constructed by recursion theorem idea) that just meets $m(\hat{e})=\theta$ but fails underlying quality (like neglecting teamwork, which $m$ doesn't measure).

In practice, many metrics-based management suffer this: publishing the exact target invites just-good-enough behavior (regression to the threshold). The formal result: $\exists \hat{e}$ such that $m(\hat{e})=\theta$ (or just above) but $\hat{e}$ violates some non-measured norm.

We can consider $\Pi$ minimal here (no paradox, just gaming) and $\Lambda$ small (no privacy issue), but $G$ huge.

Solution: don't announce exact $\theta$ or use a composite metric including random checks for side tasks.

\paragraph{(B) Red-Team Playbook Release:}
Suppose a security team (red team) has a set of attack techniques $S$ it tests the system with. They consider releasing this list for transparency. If they do, a malicious actor can design an attack that avoids all techniques in $S$ (since those are presumably the only ones tested). If $S$ was a basis for all known attacks, releasing it essentially teaches adversaries what \emph{not} to do, thus what they \emph{can} do safely.

Model: Let $X$ be the space of attacks, with a topology or sigma-algebra. The red team has a set $S \subset X$ that is a "cover" of likely attacks (perhaps an open cover meaning any typical attack intersects some pattern in $S$). If $S$ is disclosed, the adversary chooses an attack in $X \setminus \Cl(S)$ (outside closure of known patterns). If the known sets $S$ didn't fully cover $X$ (which they rarely do), then an attack exists outside. This is a separation: $S$ separated safe region from detected region, now adversary picks a point in the gap.

Thus risk $\Lambda$ (privacy not an issue here) is nil, $\Pi$ nil, but $G$ large after disclosure.

We can formalize:
Proposition: If $\bigcup S \neq X$ (the union of known tactics doesn't cover all possible tactics), then disclosing $S$ allows constructing an undetected attack $x \in X \setminus \bigcup S$. The risk of successful attack leaps to 1 whereas before disclosure maybe adversary uncertain might stumble into $S$ with some probability. So expected risk up.

Essentially, previously unknown unknowns become known unknowns to adversary.

From lattice perspective, $S$ was info that should have been kept hidden to keep adversary uncertain.

A separation lemma can be akin to: in a topological space, if $S$ is a finite family of open sets, one can choose a point outside them if they don't cover. Or in measure sense, one can have measure to choose that region.

Hence a formal bound: Risk of breach $\ge 1 - \mu(\bigcup S)$ where $\mu$ is measure of coverage. If $\mu$ was less than 1, then risk left.

This encourages partial release: maybe mention broad principles but not the exact list.

\paragraph{(C) Process vs Outcome Transparency:}
Consider an AI system making decisions. Process transparency ($T_{\text{proc}}$) means revealing the algorithm or internal reasoning. Outcome transparency ($T_{\text{out}}$) means revealing just the decisions or outputs, not how they were made.

One might guess revealing the process gives more info to game or complain about fairness, whereas revealing only outcomes still allows some accountability (people see results) but not enough to replicate the algorithm.

We model that $T_{\text{out}}(x) \subseteq T_{\text{proc}}(x)$ typically: the outcome is a subset of the full process information. So $T_{\text{proc}}$ discloses strictly more.

Thus $\lfp T_{\text{proc}} \succeq \lfp T_{\text{out}}$. If risk is monotone, risk is higher for the process transparency solution.

One might also consider fairness distortion $\Phi$: sometimes revealing process can help detect bias (so maybe $\Phi$ down with process transp.), but it also might cause gaming ($G$ up) or paradox ($\Pi$ up if algorithm can introspect? Possibly not).

However, generally we get:
Proposition: If $T_{\text{out}}(x) \preceq T_{\text{proc}}(x)$ for all $x$ (process reveals all outcome plus more), then indeed $\lfp T_{\text{out}} \preceq \lfp T_{\text{proc}}$. Combined with monotonic risk, $\Risk(\lfp T_{\text{out}}) \le \Risk(\lfp T_{\text{proc}})$.

So outcome transparency is safer. It might not satisfy as high accountability though. But often outcome accountability suffice (if decisions can be audited externally for bias by looking at patterns, though not as well as code review for fairness).
So trade-off: process transparency yields better $A$ (trust, explainability) but also more risk. Outcome transparency is the minimal needed for basic accountability.

Thus balancing them is context-dependent, but formal dominance as above.

\section{Design Calculus: Formal Prescriptions}\label{sec:design}
We synthesize guidelines as formal conditions:

\begin{theorem}[Optimal Transparency Policy Conditions]\label{thm:design-conditions}
Given the risk decomposition $\Risk = \alpha \Pi + \beta \Lambda + \gamma \Phi + \delta G$, the transparency policy $T$ that minimizes $\Risk$ subject to $A \ge A_0$ will have the following qualitative form, under mild assumptions on monotonicity and separability:
\begin{enumerate}
\item (\textbf{Partial Transparency for Self-Reference}) If $\alpha > 0$ (paradox risk matters), then the optimal $T$ must exclude a set $S_{\Pi}$ of self-referentially dangerous statements from ever being transparently declared. That is, $\lfp T$ will avoid making $\Trans(\sigma)$ claims for $\sigma$ that talk about $\Trans$ itself. Equivalently, any total transparency (including all self-referential sentences) yields $\Pi = +\infty$ cost, so is dominated by a partial policy. $S_{\Pi}$ may be constructed via a hierarchy (like grounded theory) or explicitly listing known paradox triggers.
\item (\textbf{Stratified/Revealed-in-Phases Transparency}) If the environment allows it, the solution will often do transparency in layers (stratified) rather than all-at-once. Formally, if releasing $I_1$ then $I_2$ in sequence yields lower risk than releasing $I_1 \cup I_2$ together (which could cause compounding interactions), then the optimal policy is to stratify. Sufficient conditions: if $\Risk$ is superadditive in disclosures (meaning risk of combined disclosure is more than sum of individual), stratify to mitigate. This can be proven by considering $\Risk(I_1 \cup I_2) \ge \Risk(I_1) + \Risk(I_2|I_1)$ vs sequential.
\item (\textbf{Randomized Transparency}) If agents optimize against known deterministic policies, injecting randomness or uncertainty in what is disclosed can reduce $G$ significantly. For instance, if two disclosure forms $T_1, T_2$ have similar accountability, a policy that with probability $p$ uses $T_1$ and $1-p$ uses $T_2$ can leave adversaries unsure, forcing a strategy that hedges against both, often lowering worst-case gaming. Condition: if $G(x)$ is convex in information precision (more precise info yields disproportionately higher gaming), then a random mixture lowers expected exploitation. This follows from Jensen's inequality on convex $G$.
\item (\textbf{Accountability Floor}) Ensure at minimum the constraint holds: $\forall x \in \Fix(\hat{T}), A(x) \ge A_0$. This may require including some core disclosures (perhaps outcomes or summary statistics) to ensure oversight. That is, the least fixed point $T^{*}$ must contain a subset $S_A$ (non‑negotiable disclosures) chosen such that $A(S_A) = A_0$. The rest of $T^{*}$ then consists of additional discretionary disclosures chosen by balancing other risks.
\item (\textbf{Stopping Criterion}) An implementable design: iterate disclosure until marginal risk of additional disclosure outweighs marginal gain or accountability improvement. In practice: while $A(x) < A_0$, add the most accountability-boosting, least risk-raising item. Stop when $A(x)$ just hits $A_0$. This greedy algorithm will hit optimal if $\loss$ is convex monotone and disclosures can be added incrementally. Our earlier iterative algorithm approximates $\lfp T$, but here we incorporate stopping at $A_0$ rather than full fixpoint if fixpoint overshoots.
\end{enumerate}
\end{theorem}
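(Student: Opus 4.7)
The plan is to decompose the theorem into its five qualitative prescriptions and prove each by specializing a result already established earlier in the paper, rather than attempting a single monolithic argument. The common skeleton is: invoke existence of an optimal $\hat{x} \in \Fix(\hat{T})$ from Theorem \ref{thm:KKT-lattice}, then show that any policy violating the prescribed form can be strictly improved with respect to $\loss$ while preserving $A(x) \ge A_0$, or in the paradox case incurs an unbounded penalty.

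For part (1), I would argue contrapositively: any total transparency policy must assign a transparency value to every Gödel-coded sentence, and by Theorem \ref{thm:trans-diag} this forces either unsoundness or inconsistency, driving $\Pi$ to its maximum (or formally $+\infty$ under a penalty convention). The optimal exclusion set $S_\Pi$ is then witnessed explicitly by the complement of the grounded fragment from Theorem \ref{thm:kripke-truth}: Kripke's $\lfp F$ already avoids precisely the ungrounded self-referential statements, so taking $S_\Pi$ to be their complement gives both the lower bound and the constructive upper bound. Part (4) follows almost directly from the feasibility constraint in \eqref{eq:design-opt}: any fixed point with $A(x) < A_0$ is infeasible, and by monotonicity of $A$ there is a $\subseteq$-minimal $S_A$ with $A(S_A) \ge A_0$; adjoining $S_A$ into $\hat{T}$ as a set of forced disclosures preserves monotonicity of $\hat{T}$ and hence the Knaster--Tarski machinery.

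Parts (2) and (3) reduce to standard superadditivity and convexity arguments. For stratification, the sufficient condition is a strict inequality $\Risk(I_1 \cup I_2) > \Risk(I_1) + \Risk(I_2 \mid I_1)$ (where the conditional term is defined by $\Risk$ evaluated on the sequential policy's fixed point): releasing $I_2$ from the post-$I_1$ fixed point then strictly dominates the atomic release while yielding the same end state, so by Theorem \ref{thm:lfp-optimal} the stratified policy has a smaller least fixed point and hence smaller risk. For randomization, I would treat a randomized policy as a convex combination of deterministic ones and apply Jensen's inequality: convexity of $G$ in information precision gives $\mathbb{E}[G(T_{\mathrm{rand}}(x))] \le G(\mathbb{E}[T_{\mathrm{rand}}(x)])$ in the appropriate signed sense, while linearity (or concavity) of $A$ in the mixture preserves the accountability constraint on average.

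Part (5) is the most delicate step and is where I expect the main obstacle. A greedy stopping criterion is optimal only under an exchange property that is not automatic in an arbitrary complete lattice; I would therefore strengthen the hypotheses to require $\loss$ to be submodular and $A$ modular on $L$, which together with the monotonicity already assumed reduces the problem to a lattice analogue of the classical matroid greedy argument, with the iteration $x_{n+1} = \hat{T}(x_n)$ playing the role of the greedy augmentation step. Termination at $A(\hat{x}) = A_0$ then follows from the $\epsilon$-approximation bound on $\Risk(T^k(\bot))$ established in Section \ref{sec:worked} combined with a standard stopping-time argument. The hardest interaction to handle is between parts (1) and (4): excluding $S_\Pi$ from the domain of $\hat{T}$ could in principle make the accountability constraint infeasible. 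I would close this gap by arguing that $A$ is, without loss of generality, definable on the grounded sublanguage only — since any claim that cannot be grounded cannot genuinely contribute to verifiable oversight — so the restriction $x \subseteq S \setminus S_\Pi$ is compatible with $A(x) \ge A_0$ whenever the unrestricted problem was feasible.
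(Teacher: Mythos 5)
Your proposal follows essentially the same route as the paper's own justification, which is itself only a four-line sketch: the same item-by-item decomposition, with part (1) reduced to Theorem~\ref{thm:trans-diag}, part (2) to superadditivity of $\Risk$, part (3) to a Jensen/minimax argument, part (4) to the feasibility constraint, and part (5) to a greedy/KKT argument. Your refinements — constructing $S_\Pi$ as the complement of the grounded fragment from Theorem~\ref{thm:kripke-truth}, strengthening part (5) to require submodularity of $\loss$, and noticing the potential feasibility conflict between excluding $S_\Pi$ and meeting $A_0$ — go beyond what the paper writes here, but they are consistent with remarks the paper makes elsewhere (Section~\ref{sec:design} itself concedes that ``the proof of optimality would require submodularity assumptions''), so this is added care rather than a different method.

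Two steps in your version would fail as written and are worth repairing. First, in part (2) you assert that the stratified release ``yields the same end state'' as the atomic release and simultaneously conclude via Theorem~\ref{thm:lfp-optimal} that it ``has a smaller least fixed point and hence smaller risk.'' These cannot both hold: $\Risk$ in this paper is a functional on terminal states, not on disclosure paths, so identical end states force identical risk. The paper's sketch (implicitly) lets $\lfp(T_2\circ T_1)$ and $\lfp(T_{\mathrm{combined}})$ differ; you should either do the same or make $\Risk$ explicitly path-dependent before claiming a strict improvement. Second, in part (3) your Jensen inequality runs the wrong way: for convex $G$ one has $G(\mathbb{E}[X]) \le \mathbb{E}[G(X)]$, not the reverse. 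The benefit of randomization is that the adversary, unable to condition on which policy is active, can only exploit the expected information and so achieves roughly $G(\mathbb{E}[\,\cdot\,])$, which convexity bounds \emph{above} by the average $\mathbb{E}[G(\,\cdot\,)]$ of what it would achieve knowing the realized policy; the inequality $\mathbb{E}[G(T_{\mathrm{rand}}(x))] \le G(\mathbb{E}[T_{\mathrm{rand}}(x)])$ you wrote is the concave direction and does not follow from the stated convexity hypothesis.
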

\begin{proof}[Justification (Sketch)]
Each item is justified by prior results: (1) by Theorem \ref{thm:trans-diag} and Corollary \ref{cor:partial-transparency}. (2) Stratified release is beneficial if interactions are adverse; formally, if $T = T_2 \circ T_1$ yields $\Risk(\lfp(T_2\circ T_1)) < \Risk(\lfp(T_{combined}))$. This holds if risk from $I_1$ and $I_2$ non-linearly interacts. No contradiction with constraints if time is allowed. (3) is a game-theoretic result akin to mixing strategies making the opponent (adversary) less effective. The convexity of exploitability makes mixing reduce max-exploitation (minimax logic). (4) trivial from constraint; one picks minimal set that satisfies it. (5) is essentially KKT logic or incremental gradient method. If disclosing an item improves $A$ a lot with small risk, do it, until any further item has too high risk per $A$ gained.
\end{proof}

Implementing these:
	•	Omit certain categories of info (like self-analytical info).
	•	Reveal gradually (like first share general principles, later specifics if needed).
	•	Possibly randomize details (like slight fuzzing of data).
	•	Always include key accountability metrics (like outcomes or summary audits).
	•	Use an algorithm to approximate minimal but sufficient info: our pseudocode earlier but incorporate $A$ check:
\begin{enumerate}
\item $X = \emptyset$.
\item While $A(X) < A_0$:
\begin{enumerate}
\item Find an item $s \in S\setminus X$ whose disclosure most increases $A$ relative to $\Risk$ added (like maximize $\Delta A/\Delta \Risk$ or similar).
\item $X \gets X \cup {s}$.
\item If $\Risk(X) >$ some threshold (like if about to violate risk tolerance), break and reconsider policy.
\end{enumerate}
\item Return $X$.
\end{enumerate}
This greedy algorithm needs assumptions to be optimal (like linear or separable risk contributions). If those hold, it yields a near-minimal $X$ for required $A$.

In domain theory terms, one constructs $X$ by adding join-irreducible elements one by one until meet accountability. If risk is submodular or something, greedy is exact.

As a final note, the design calculus underscores: it's rarely optimal to reveal everything. The math ensures some constraint is active, preventing full disclosure.

\section{Conclusion}
We have developed a rigorous, logic-first analysis of transparency through the lens of fixed-point theorems and self-referential paradoxes. The formalism allowed us to derive limits (no-go theorems) and constructive design principles in equal measure. From Gödel's diagonal lemma to Kleene's recursion theorem, classical theoretical computer science results find new interpretation as warnings or guides for transparency policy:
	•	Gödel/Tarski: complete self-transparency is impossible in a classical consistent system.
	•	Lawvere: any sufficiently expressive disclosure will entail self-referential equilibria.
	•	Knaster–Tarski: transparency policies have extremal fixed points; minimal ones are safest.
	•	Kripke: relaxing truth to partial values resolves paradox at cost of leaving some questions unanswered — a reasonable trade in practice.
	•	Löb: beware of self-justifying policies; require independent evidence.
	•	Kleene: any published metric invites a fixed-point exploit; hold some metrics back or randomize.
	•	Paraconsistent: you can "have it all" (a total truth-telling) if you're willing to live with contradictions — a theoretical option perhaps not practically palatable.
	•	Modal $\mu$-calculus: formulate requirements as nested fixpoints to ensure design invariants.

Our optimization view, albeit abstract, suggests the optimal transparency is usually partial and principled, not maximal. The "radical transparency" slogan runs afoul of diagonalization — too much openness undermines itself.

We focused on mathematical logic to articulate these issues. Future work might integrate more empirical aspects or quantify these risks on real data. Nonetheless, the core take-away is enduring: any powerful system that attempts to fully expose itself will encounter liar paradoxes and Goodhart's curses. The mathematics was our guide to find a middle path: enough transparency for accountability, but not so much that the system (or its users) can hack itself.

We conclude with a symbol glossary and appendices for deeper proofs omitted due to space.


\appendix

\section*{Appendix A: Deferred Proofs}
\addcontentsline{toc}{section}{Appendix A: Deferred Proofs}
\begin{proof}[Proof of Theorem~\ref{thm:lawvere} (Sketch)]
We provide only a sketch.  Working in a Cartesian closed category, one uses the exponential $X^X$ and its evaluation map to show that for any endomorphism $F\colon X\to X$ there is an element $x\in X$ with $F(x)=x$.  The universal property of the exponential yields a morphism $g\colon X\to X^X$ such that $\mathrm{eval}(g(x),x)=F(x)$ for each $x$.  A diagonal argument then produces $x$ with $F(x)=x$.  For the full categorical proof see Lawvere's original article\cite{Lawvere1969}.\AW{0.25}
\end{proof}

\begin{proof}[Proof of Proposition \ref{prop:paraconsistent}]
We outline the LP model: take all sentences of the language including a truth predicate $T$. The theory $\Delta = {T(\ulcorner \phi\urcorner) \leftrightarrow \phi : \phi \text{ any sentence}}$ is inconsistent classically. In LP, define a model $M$ with valuation $V$ as:
	•	If $\phi$ is not provable from $\Delta$, let $V(\phi) = 0$ (false).
	•	If $\neg\phi$ is not provable from $\Delta$, let $V(\phi) = 1$ (true).
	•	If both $\phi$ and $\neg \phi$ are provable from $\Delta$ (which happens for liar sentences and similar under $\Delta$), assign $V(\phi) = V(\neg\phi) = 1$ (both true and false).

This yields a valuation that satisfies all biconditionals in the sense of LP (where a biconditional is true if either both sides have the same truth value 1 or both have 0 or both are 1/0 in LP which is weird since LP doesn't have an "both" but in LP any formula with both values is considered true for atomic formulas, and extended by truth tables that make $\leftrightarrow$ true in that case as well because each side is true? Actually in LP, an atomic proposition can be both true and false, but a compound like $A \leftrightarrow B$ is considered true if whenever $A$ and $B$ share at least truth or something. It's easier to reason this: since in the model we've given, for any $\phi$, $T(\ulcorner \phi\urcorner)$ and $\phi$ have the same truth values multiset ${0/1}$, thus $T(\phi) \leftrightarrow \phi$ is assigned true in LP's truth tables (which typically treat (1,1) as true, (0,0) as true, (both,both) as true presumably).
Thus all axioms hold and not everything is true since some $\phi$ are only false or only true but not both, so you can't prove an arbitrary $\psi$ because you'd need $\psi$ to become both to automatically be true (the consequence relation in LP can be defined via preservation of 'truth' which is ambiguous under gluts, so one typically says $\Gamma \models_{LP} \phi$ if for every model that makes all of $\Gamma$ designated (true), $\phi$ is designated. In our model, $\Delta$ itself makes itself all true by design, but does it make, say, an unrelated proposition $\rho$ true? If $\rho$ is not mentioned, $\rho$ is false only in our construction, so $\rho$ not designated, so $\Delta \not\models \rho$. Hence consistent).
Thus $\Delta$ is satisfiable in LP, hence not explosive, and $T$ is total by construction.
\end{proof}

\section*{Appendix B: Symbol Glossary}
\addcontentsline{toc}{section}{Appendix B: Symbol Glossary}
\begin{table}[htbp]\centering
\begin{tabular}{ll}
\textbf{Symbol} & \textbf{Meaning} \\ \hline
$\N, \Z, \R$ & Natural, Integer, Real numbers. \\
$\Pow(S)$ & Power set of $S$ (set of all subsets of $S$). \\
$\ulcorner \phi\urcorner$ & Gödel code of formula $\phi$. \\
$\Prov_T(x)$ & Provability predicate of theory $T$ on code $x$. \\
$\True$ & Truth predicate (interpreted in context, e.g. $\True_\alpha$ stage sets). \\
$\Trans(x)$ & Transparency predicate (we defined similar to truth). \\
$\lfp T$, $\gfp T$ & Least and greatest fixed point of operator $T$. \\
$\Fix(T)$ & Set of all fixed points of $T$. \\
$\mu X.,\phi(X)$ & Least fixed point formula in $\mu$-calculus. \\
$\nu X.,\psi(X)$ & Greatest fixed point formula. \\
$\Sem{\phi}$ & Semantics (denotation) of formula $\phi$ (usually as a set of states). \\
$\Box, \Diamond$ & Modal necessity and possibility (provability or next/always, eventually). \\
$\modelsK$ & Semantic entailment (in a Kripke or logical structure). \\
$\mid$ & Such that (used in set comprehension). \\
$\Risk, \Gain, A$ & Risk functional, Gain functional, Accountability measure. \\
$\loss(x)$ & Combined loss = $\Risk(x) - \lambda \Gain(x)$ to minimize. \\
$\Pi(x), \Lambda(x), \Phi(x), G(x)$ & Components of risk: paradox, leakage, fairness, gaming. \\
$\Box \varphi$ & In provability logic, $\Prov_T(\ulcorner \varphi\urcorner)$ (provable $\varphi$). \\
$\models$ & Logical entailment (from axioms or premises to conclusion). \\
$\vdash$ & Derivability in a formal system. \\
$\Vdash$ & Satisfaction relation in modal logic (like $M,w \Vdash \phi$). \\
$\Id$ & Identity function. \\
$\diag$ & (In text use) diagonalization or diagonal function in category. \\
$\mathrm{eval}$ & Evaluation map in a CCC ($X^X \times X \to X$). \\
\hline
\end{tabular}
\caption{Glossary of symbols and notation.\label{tab:glossary}}
\end{table}

\clearpage
\section*{Appendix C: Pseudocode and Algorithms}
\addcontentsline{toc}{section}{Appendix C: Pseudocode and Algorithms}
We provide a high-level pseudocode for iterating a transparency policy to find a fixed point under constraints, as discussed:

\vspace{1ex}
\noindent \textbf{Algorithm: ComputeMinTransparency($T, A_0$)}
\begin{enumerate}
\item $X \leftarrow \bot$ \hfill // start with no disclosure
\item \textbf{repeat}
\begin{enumerate}
\item $X_{\text{old}} \leftarrow X$.
\item $X \leftarrow T(X_{\text{old}})$.
\item \textbf{if} $A(X) < A_0$ \textbf{then}
\begin{itemize}
\item Identify an item $s \in S$ (not yet in $X$) that maximizes $\frac{A(X \cup {s}) - A(X)}{\Risk(X \cup {s}) - \Risk(X)}$.
\item $X \leftarrow X \cup {s}$.
\end{itemize}
\item \textbf{if} $A(X) > A_0$ and $X_{\text{old}}=X$ \textbf{then break}.
\end{enumerate}
\item \textbf{return} $X$.
\end{enumerate}

This algorithm iteratively applies $T$ (which may disclose a chunk of info), then if accountability is not reached, it force-adds the most "efficient" piece of info to boost $A$. It stops when a fixed point is reached with $A \ge A_0$. The selection step uses a heuristic ratio; in practice one might add multiple items at once if independent.

The complexity: each iteration might scan potential items to add (size of $S$). If $|S|=N$, worst-case adds all, so $O(N^2)$ steps.

This approach combines the iterative Knaster–Tarski computation with a greedy satisfaction of the constraint. It ensures the final outcome is on the boundary $A=A_0$ typically (hitting just sufficient transparency). The proof of optimality would require submodularity assumptions.

For computing fixpoints in code, one might represent $L$ implicitly (like a lattice of properties) and $T$ as a function. The algorithm is straightforward to implement for finite lattices by brute force.

A termination proof: If $T$ is monotone and $S$ finite, each loop adds something (either by $T$ or by the explicit add), so $X$ grows. It cannot grow forever beyond $S$. So it terminates in at most $|S|$ loops.

If $S$ infinite or $T$ not inflationary, termination might be trickier; but typically one expects to either converge or continue until manually stopped (approximation scenario).

We have thus provided the tools and formal reasoning a designer would need to implement and justify a responsible transparency strategy in complex self-referential systems.

\end{document}